\newtheorem{definition}{Definition}
\newtheorem{theorem}[definition]{Theorem}
\newtheorem{proposition}[definition]{Proposition}
\newtheorem{lemma}[definition]{Lemma}
\newtheorem{fact}[definition]{Fact}
\newtheorem{corollary}[definition]{Corollary}
\newtheorem{question}{Question}
\newtheorem*{claim*}{Claim}
\newcommand{\0}{\emptyset}
\newcommand{\mc}{\mathcal}
\newcommand{\mbb}{\mathbb}
\newcommand{\RR}{\mathbb{R}}
\newcommand{\symdif}{\triangle}
\newcommand{\IFF}{\Leftrightarrow}
\newcommand{\IMP}{\Rightarrow}
\newcommand{\Ee}{\mc{E}}
\newcommand{\Ii}{\mc{I}}
\newcommand{\Jj}{\mc{J}}
\newcommand{\Mm}{\mc{M}}
\newcommand{\Nn}{\mc{N}}
\newcommand{\Aa}{\mc{A}}
\newcommand{\Bb}{\mc{B}}
\newcommand{\Gg}{\mc{G}}
\newcommand{\bez}{\backslash}
\newcommand{\se}{\subseteq}
\newcommand{\es}{\supseteq}
\newcommand{\rest}{\restriction}
\newcommand{\baire}{\omega^\omega}
\newcommand{\concat}{^{\frown}}
\newcommand{\tn}[1]{\textnormal{#1}}
\newcommand{\add}{\tn{add}}
\newcommand{\cof}{\tn{cof}}
\newcommand{\Bor}{\tn{Bor}}
\newcommand{\dom}{\textnormal{dom}}
\newcommand{\Perf}{\textnormal{Perf}}
\newcommand{\ZFC}{\textnormal{ZFC}}
\def\Bb{\mathcal{B}}
\def\bbr{\mathbb{R}}
\def\c{\mathfrak{c}}
\def\w{\omega}
\def\baire{\w^\w}
\def\perf{\rm Perf}
\def\bor{\rm Bor}
\def\ctbl{\rm ctbl}
\def\cof{\rm cof}
\def\then{\longrightarrow}
\def\iff{\longleftrightarrow}
\def\MA{{\bf MA }}
\def\ZFC{{\bf ZFC }}
\title{Around the Eggleston Theorem}
\author{Marcin Michalski}
\email{marcin.k.michalski@pwr.edu.pl}
\author{Robert Rałowski}
\email{robert.ralowski@pwr.edu.pl}
\author{Szymon Żeberski}
\email{szymon.zeberski@pwr.edu.pl}
\thanks{The work has been partially financed by grant {\bf 8211204601, MPK: 9130730000} from the Faculty of Pure and Applied Mathematics, Wrocław University of Science and Technology.
	\\
	AMS Classification: Primary: 03E75, 28A05, 54H05; Secondary: 03E17
	\\
	Keywords: Eggleston Theorem, Mycielski Theorem, Shoenfield Absoluteness Theorem, perfect set, perfect tree, uniformly perfect tree, Silver tree, Spinas tree, Fubini product}
\address{Marcin Michalski, Robert Rałowski, Szymon Żeberski, Faculty of Pure and Applied Mathematics, Wrocław University of Science and Technology, 50-370 Wrocław, Poland}
\date{}
\begin{document}

\begin{abstract}
	The motivation of this work are the two classical theorems on inscribing rectangles and squares into large subsets of the plane, namely Eggleston Theorem and Mycielski Theorem.
	
	Using Shoenfield Absoluteness Theorem we prove that for every Borel subset of the plane with uncountably many positive (with respect to measure or category) vertical section contains a rectangle $P\times B$ where $P$ is perfect and $B$ is Borel and positive. We also obtained a variant of Eggleston Theorem regarding the $\sigma$-ideal $\Ee$ generated by closed sets of measure zero.
	
	Furthermore we proved that every comeager (resp. conull) subset of the plane contains a rectangle $[T]\times H$, where $T$ is a Spinas tree containing a Silver tree and $H$ is comeager (resp. conull). Moreover we obtained a common generalization of Eggleston Theorem and Mycielski Theorem stating that every comeager (resp. conull) subset of the plane contains a rectangle $[T]\times H$ modulo diagonal, where $T$ is a uniformly perfect tree, $H$ is comeager (resp. conull) and $[T]\se H$.
\end{abstract}

\maketitle

\section{Introduction}

  The main motivation of this paper are the two following theorems on inscribing special kind rectangles and squares into large subsets of the plane.
   
\begin{theorem}[Eggleston \cite{Egg}] For every conull set $F\se [0,1]^2$ there are a perfect set $P\se [0,1]$ and conull $B\se [0,1]$ such that $P\times B \se F.$
\end{theorem}

\begin{theorem}[Mycielski \cite{Myc}]\label{Mycielski}
	For every comeager or conull set $X\se [0,1]^2$ there exists a perfect set $P\se [0,1]$ such that $P\times P\se X\cup \Delta$, where  $\Delta=\{(x,x): x\in [0,1]\}.$
\end{theorem}

  In \cite{Zeb} the author gave an alternative nonstandard proof of Eggleston Theorem. He also generalized it for subsets of the plane of positive measure. Analogous results were proved for the category.
  
  In \cite{BNT} the authors applied Eggleston Theorem to prove that the set of feebly continuity points of a Lebesgue measurable function $f: \RR^2\to \RR$ contains a rectangle of perfect sets.
  
  Another application of Eggelston Theorem appeared in  \cite{CieRal}. The authors showed that for every conull subset $B$ of a Polish (measure) space $X$ and an uncountable $\Gg_\delta$ subset $G$ of the space of measure preserving homeomorhpisms over $X$ there is a perfect set $P\se G$ such that the set $\bigcap_{f\in P}f[B]$ is conull. Similar results were also proved in the category case.

  Several directions of generalizing Mycielski Theorem were explored in \cite{MiRalZeb}, \cite{Spi1}, \cite{SolSpi} with various notions of largeness or more specific kind of perfect sets, e.g. superperfect sets. Theorem \ref{Mycielski} was also an inspiration for \cite{BanZdo} where authors showed that every comeager subset of the plane contains (modulo diagonal) a square of nowhere meager sets.

Our goal is to generalize Eggleston Theorem and conjoin it with Mycielski Theorem further developing methods used in \cite{Zeb} and \cite{MiRalZeb}. One of the directions of generalization is to meddle with the notion of largeness, i.e. replace the conull or comeager set with a set whose complement lies in another planar $\sigma$-ideal. The other direction is to replace the perfect set with a body of a certain kind of a perfect tree. This requires changing the underlying space to the one where trees grow. The most natural choice is the Cantor space $2^\omega$.

  We use standard set-theoretical notation following \cite{Jech}. Natural numbers are denoted by $\w$. For any set $A$ denote by
  \begin{itemize}
    \item $A^{<\omega}$ - the set of all finite sequences with members from $A$;
    \item $A^{\omega}$ - the set of all sequences with members from $A$;
    \item $[A]^{<\omega}$ - the set of finite subsets of $A$;
    \item $[A]^\omega$ - the set of infinite countable subsets of $A$;
    \item $P(A)$ - the power set of $A$.
  \end{itemize}
   
  Recall that $X$ is a Polish space if it is separable and completely metrizable. $\bor(X)$ denotes the family of Borel subsets of $X$. $\Ii$ and $\Jj$ denote $\sigma$-ideals of sets, i.e. families of sets closed under countable unions and taking subsets. We say that the family $\Aa$ is a base for $\Ii$ if for every set $A\in \Ii$ there is $B\in \Ii\cap \Aa$ such that $A\se B$. By $\bor(X)[\Ii]$ we denote $\sigma$-algebra of $\Ii$-measurable sets, i.e. sets of the form $B\symdif A$, where $B\in\bor(X)$ and $A\in \Ii$. We will consider well known $\sigma$-ideals, i.e. $\Mm(X)$ of meager subsets of $X$, $\Nn(X)$ of null subsets of $X$, $\Ee(X)$ generated by closed null subsets of $X$ and $\ctbl(X)$ of countable subsets of $X$. We will skip specifying the underlying space if the context is clear. All of these $\sigma$-ideals have Borel bases.
  
  There are certain cardinal coefficients associated with $\sigma$-ideals. In this paper we will use the following
  \begin{align*}
    \add(\Ii)&=\min\{|\Aa|: \Aa\se \Ii\,\land\,\bigcup\Aa\notin\Ii\},
    \\
    \cof(\Ii)&=\min\{|\Aa|: \Aa\se \Ii\,\land\,\Aa \tn{ is a base for }\Ii\}.
  \end{align*}
  We will mainly consider the Cantor space $2^\omega$. The topology of $2^\omega$ is generated by clopen sets of the form $[\sigma]=\{x\in 2^\omega:\; \sigma\se x\}$, where $\sigma\in 2^{<\w}$.
  
  We call a set $T\se 2^{<\w}$ a tree if for every $\sigma\in T$ and $n\in\dom(\sigma)$ it is the case that $\sigma\rest n\in T$. 
  \begin{definition}
		We say that a tree $T\se 2^{<\omega}$ is
		\begin{itemize}
		  \item perfect if $(\forall \sigma\in T) (\exists \tau\es \sigma)(\tau\concat 0, \tau\concat 1\in T)$;
		  \item uniformly perfect if it is perfect and 
		  \[
		    (\forall \sigma, \tau\in T)(|\sigma|=|\tau|\land \sigma\concat 0, \sigma\concat 1\in T \to \tau \concat 0, \tau\concat 1\in T);\]
		  \item a Silver tree if $T$ is perfect and 
		  \[
		    (\exists x\in 2^\omega)(\exists A\in [\omega]^\omega)(\forall \sigma\in T)(\forall n\in \dom(\sigma))(n\notin A \to \sigma(n)=x(n));
		  \]
			\item a Spinas tree if
				\[
					(\forall \tau\in T)(\exists N\in\omega) (\forall n\geq N) (\forall i\in 2) (\exists \tau'\in T\cap 2^{n+1})(\tau\se\tau'\,\land\,\tau'(n)=i). 
				\]
		\end{itemize}
	\end{definition}
  
  Notice that each Silver tree is uniformly perfect and each Spinas tree is perfect. A body of a tree $T\se 2^{<\w}$ is the set
  \[
    [T]=\{x\in 2^\omega:\; (\forall n\in \omega)(x\rest n\in T)\}.
  \]
  Bodies of perfect trees are perfect subsets of $2^\omega$. The notation for bodies of trees coincides with the one used for basic clopen sets, however we hope it will not lead to any confusion.
  
  Let $+$ be a coordinate wise addition modulo $2$. We will use this operation also for $A+x$, $\sigma+\tau$, $x+\sigma$, $A+\sigma$, where $A\se 2^\omega, x\in 2^\omega$ and $\sigma, \tau \in 2^{<\w}$. More precisely, let $\sigma\in 2^k, \tau=2^l, k\le l$. Then
  \begin{align*}
    A+x&=\{a+x:\; a\in A\};
    \\
    \sigma+\tau&=\tau+\sigma=\{(n, \sigma(n)+\tau(n)):\; n<k\}\cup\{(n, \tau(n):\; k\le n<l)\};
    \\
    x+\sigma&=((x\rest n)+\sigma)\cup (x\rest(\omega\bez n));
    \\
    A+\sigma&=\{x+\sigma:\; x\in A\}.
  \end{align*}
  
  For $n\in\omega$ let $\mbb{0}_n=(\underbrace{0, 0, \dots, 0}_{\tn{n - times}})$ and $\mbb{1}_n=(\underbrace{1, 1, \dots, 1}_{\tn{n - times}})$.

\section{Nonstandard proofs}
In this section we will focus on variations of Eggleston Theorem considering various notions of bigness. Proofs of the results will be based on Shoenfield Absoluteness Theorem.

By standard Polish spaces we mean countable products of $\baire, 2^\w, [0,1], \RR$ and $\Perf(\RR)$ - a space of perfect subsets of $\RR$. 

We say that $\varphi$ is $\Sigma_2^1$-sentence if for some canonical Polish spaces $X,Y$ and Borel set $B\subseteq X\times Y$ the sentence $\varphi$ is of the form:
$$
(\exists x\in X)(\forall y\in Y) (x,y) \in B.
$$
The Borel set $B$ has its so called Borel code $b\in\baire$ (see \cite{Kech}). The triple $(X, Y, b)$ is a parameter of $\Sigma^1_2$-sentence $\varphi$.
Now, let us recall Shoenfield Absoluteness Theorem.
\begin{theorem}[Shoenfield]\label{Shoenfield} Let $M\subseteq N$ be standard transitive models of ZFC and ${\w_1^N \subseteq M.}$ Let $\varphi$ be a $\Sigma_2^1$-sentence with a parameter from the model $M.$ Then
	\[
	M\models \varphi \iff N\models \varphi.
	\]
\end{theorem}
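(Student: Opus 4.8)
This is Shoenfield's classical absoluteness theorem, and the plan is to run its usual proof through the tree representation of $\Pi^1_1$ sets together with the absoluteness of well-foundedness. First I would normalise the statement: since the standard Polish spaces admit recursive Borel isomorphisms with $\baire$, and a Borel set is $\Delta^1_1$ (so the relation $(x,y)\in B$, and hence also $(\forall y)\,(x,y)\in B$, is $\Pi^1_1$ in $x$ with parameter the Borel code $b\in M$), it suffices to prove the theorem for a sentence $\varphi\equiv(\exists x\in\baire)\,\psi(x)$ where $\psi$ is a $\Pi^1_1$-formula with a real parameter $a\in M$.

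Next I would assemble the two standard ingredients. (i) \emph{Kleene normal form for $\Pi^1_1$}: there is an assignment $x\mapsto T_x$, recursive in $x$ and $a$ — hence given by an absolute recursion, so computed identically in every transitive model of ZFC containing $a$ — with each $T_x$ a tree on $\omega$ and $\psi(x)$ equivalent to ``$T_x$ is well-founded''. (ii) \emph{Absoluteness of well-foundedness}: if $M\subseteq N$ are transitive models of ZFC and $R\in M$ is a tree, then $M$ and $N$ agree on whether $R$ is well-founded; downward because an infinite branch of $R$ lying in $M$ also lies in $N$, and upward because the ranking function $R\to\Ord$ constructed inside $M$ is still, in $N$, a map into the ordinals that strictly decreases along branches. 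Ingredients (i) and (ii) make $\Pi^1_1$-formulas absolute between $M$ and $N$, which already gives the direction $M\models\varphi\Rightarrow N\models\varphi$: a witness $x\in M$ has $T_x$ well-founded in $M$, hence in $N$, hence $N\models\psi(x)$. This direction needs nothing about $\omega_1$.

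For the converse assume $N\models\varphi$ and fix $x\in\baire\cap N$ with $T_x$ well-founded; let $\gamma=\rank(T_x)$, computed in $N$. By boundedness $\gamma<\omega_1^N$, and since $\omega_1^N\subseteq M$ this gives $\gamma\in M$. Fixing a recursive enumeration of $\omega^{<\omega}$, I would build — by an absolute recursion from $a$ and $\gamma$, so again identically in $M$ and $N$ — a tree $\mathcal{S}$ on $\omega\times(\gamma+1)$ whose infinite branches code precisely the pairs $(z,\rho)$ with $z\in\baire$ and $\rho\colon T_z\to\gamma+1$ satisfying $\rho(s)>\rho(t)$ whenever $s\subsetneq t$; such a $\rho$ exists exactly when $T_z$ is well-founded of rank $\le\gamma$. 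In $N$ the tree $\mathcal{S}$ has the branch coded by $x$ together with the ranking function of $T_x$, so $\mathcal{S}$ is ill-founded in $N$; by the absoluteness of well-foundedness $\mathcal{S}$ is ill-founded in $M$, so $M$ contains a branch of $\mathcal{S}$, which decodes to some $z\in M$ with $T_z$ well-founded, i.e. $M\models\psi(z)$ and hence $M\models\varphi$.

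The main obstacle is not a calculation but the packaging: proving the $\Pi^1_1$ normal form and verifying that the recursions producing $T_x$ and $\mathcal{S}$ are genuinely absolute, so that $M$ and $N$ literally share these objects and the absoluteness of well-foundedness can be applied to them. The sole place the hypothesis $\omega_1^N\subseteq M$ is used is in placing the ordinal $\gamma=\rank(T_x)$ of an $N$-side witness into $M$; everything after that is the easy absoluteness of well-foundedness. One could instead route the argument through a single Shoenfield tree on $\omega\times\omega_1$ — observing along the way that $\omega_1^M\le\omega_1^N$, as otherwise $\omega_1^N\in M$ would be countable in $M$ and therefore in $N$ — but then one must separately ensure $\omega_1^N\in M$, i.e. handle the degenerate possibility $\Ord^M=\omega_1^N$; bounding by the rank of a fixed witness avoids this.
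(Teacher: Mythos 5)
The paper simply recalls Shoenfield's Absoluteness Theorem as a classical background result and gives no proof of it, so there is no argument in the text to compare yours against. Your sketch is the standard proof and it is sound: normal-form reduction of the $\Pi^1_1$ matrix to well-foundedness of a recursively assigned tree $T_x$, absoluteness of well-foundedness for the easy direction, and, for the converse, bounding $\gamma=\rank(T_x)$ below $\omega_1^N\subseteq M$ and pulling a branch of the auxiliary tree on $\omega\times(\gamma+1)$ back to $M$. Your closing remark is a genuine refinement worth keeping: the hypothesis gives only $\omega_1^N\subseteq M$, not $\omega_1^N\in M$, so the degenerate case $\Ord^M=\omega_1^N$ is not excluded and the familiar single Shoenfield tree on $\omega\times\omega_1^N$ cannot automatically be formed inside $M$; bounding by the rank of a fixed $N$-side witness sidesteps this cleanly.
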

Let us recall that if $N$ is a generic extension of a standard transitive model $M$ of ZFC then $Ord^M = Ord^N$ and $\w_1^N\subseteq M.$

A method of providing nonstandard proofs of mentioned theorems will be as follows. We start with a standard transitive model $M$ of ZFC and find a generic extension $N$ of $M$ in which the theorem can be easily proved. Then we verify that the theorem forms a $\Sigma_2^1$-sentence. We apply Shoenfield Absoluteness Theorem to deduce that it is true in the ground universe $M.$


Let us recall that for ideals $\Ii\se P(X), \Jj\se P(Y)$ we define the Fubini product $\Ii\otimes\Jj$ of these ideals in the following way
\[
A\in \Ii\otimes\Jj \IFF (\exists B\in \tn{Bor}(X\times Y))(A\se B\;\land\; \{x\in X: B_x\notin\Jj\}\in\Ii),
\]
 $B_x=\{y\in Y: (x,y)\in B\}$ is a vertical section of the set $B$ (similarly we define a horizontal section $B^y$).

We say that $\Ii$ is  Borel--on--Borel if for every $B\in \bor(X\times X)$ the set 
$$\{x\in X:\; B_x\in\Ii\}$$
is Borel. Recall that $\Mm$ and $\Nn$ are Borel--on--Borel (see \cite{Kech}). 

As a tool we will use Cichoń Kamburelis and Pawlikowski theorem about cofinality of measure algebra, see \cite{CiKamPaw}.
\begin{theorem}[Cichoń-Kamburelis-Pawlikowski] There exists a dense subset of measure algebra $\bor(2^\w)/\Nn$ of cardinality of $\cof(\Nn).$ 
\end{theorem}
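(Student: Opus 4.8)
The plan is to establish that $\mathbb B=\bor(2^\w)/\Nn$ has a dense subset of size at most $\cof(\Nn)$ (any such can then be enlarged to one of cardinality exactly $\cof(\Nn)$), routing everything through positive‑measure closed sets. Two elementary observations come first. (i) The positive‑measure closed sets are dense in $\mathbb B$, by inner regularity of the measure. (ii) A closed $F\se2^\w$ with $\mu(F)>0$ is exactly a set of the form $F=2^\w\setminus\bigcup_nJ_n$ with the $J_n$ clopen and $\sum_n\mu(J_n)<1$: write $2^\w\setminus F$ as an increasing union $\bigcup_kC_k$ of clopen sets and telescope, $J_0=C_0$, $J_{k}=C_k\setminus C_{k-1}$. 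Using these, together with the Lebesgue density theorem and the canonical measure‑scaling homeomorphisms $\varphi_s\colon2^\w\to[s]$ for $s\in2^{<\w}$, the construction reduces to the following: produce a single family $\Gg$, of cardinality $\cof(\Nn)$, of sequences $\langle J_n\rangle$ of clopen sets with $\sum_n\mu(J_n)<1$, such that every Borel $B$ with $\mu(B)<\tfrac14$ satisfies $B\se\bigcup_nJ_n$ modulo $\Nn$ for some $\langle J_n\rangle\in\Gg$. Indeed, $\{\,[\varphi_s[2^\w\setminus\bigcup_nJ_n]]:s\in2^{<\w},\ \langle J_n\rangle\in\Gg\,\}$ is then dense in $\mathbb B$: given a positive‑measure Borel $A$, pick $s$ with $\mu(A\cap[s])>\tfrac34\mu([s])$, so that $B:=2^\w\setminus\varphi_s^{-1}[A\cap[s]]$ has $\mu(B)<\tfrac14$; choosing $\langle J_n\rangle\in\Gg$ with $B\se\bigcup_nJ_n$ mod $\Nn$ gives $\varphi_s[2^\w\setminus\bigcup_nJ_n]\se A$ mod $\Nn$, a set of measure $\ge2^{-|s|}\big(1-\sum_n\mu(J_n)\big)>0$. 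This family has size $\aleph_0\cdot\cof(\Nn)=\cof(\Nn)$.

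Constructing $\Gg$ is the heart of the matter, and it is where the combinatorial description of $\cof(\Nn)$ is needed. By Bartoszy\'nski's characterisation of the null ideal, every null set is contained in a set of the form $\bigcap_n\bigcup_{m\ge n}J_m$ with the $J_m$ clopen and $\sum_m\mu(J_m)<\infty$, and $\cof(\Nn)$ equals the least cardinality of a family of such sequences cofinal under eventual inclusion (equivalently, the least size of a family of slaloms localising all reals). Fixing such a cofinal family $\{\langle J^\alpha_m\rangle:\alpha<\cof(\Nn)\}$, the task is, given $B$ with $\mu(B)<\tfrac14$, to recover from it --- modulo $\Nn$ --- a code of this shape. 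I would do this by decomposing $2^\w$ according to the Lebesgue‑density function $s\mapsto\mu(B\cap[s])/\mu([s])$ at successively finer scales: on the pieces where $B$ is sparse its contribution is genuinely null and is dominated by some $\langle J^\alpha_m\rangle$, and on the pieces where $B$ is dense one passes to the complement and recurses; reassembling the resulting clopen sets over all scales and all $\alpha$ into sequences with summable measures yields $\Gg$. I expect this passage to be the main obstacle: the quotient by $\Nn$ is blind to individual null sets, so the cofinality of $\mathbb B$ cannot be read off the algebra and must be pulled out of the fine structure of $\Nn$, with delicate bookkeeping of scales, density estimates, ``modulo $\Nn$'' relations, and the cardinal arithmetic $\cof(\Nn)^{\aleph_0}=\cof(\Nn)$.

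The statement only requires the upper bound, but the same ideas give the matching lower bound, hence that the density of $\mathbb B$ is exactly $\cof(\Nn)$: from a dense family of positive‑measure closed sets $\{F_d:d\in D\}$ one checks, via a maximal antichain of $D$ below $[2^\w\setminus B]$, that the open sets $2^\w\setminus(F_{d_1}\cup\dots\cup F_{d_k})$ trap every small‑measure Borel set ``efficiently'' (each $B$ of measure $<\eps$ lies, modulo $\Nn$, inside such a set of measure $<2\eps$), and decoding this family through Bartoszy\'nski's characterisation produces a cofinal subfamily of $\Nn$ of cardinality $|D|$.
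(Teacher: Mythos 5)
The paper does not prove this theorem; it cites \cite{CiKamPaw} and uses the result as a black box, so there is no in-paper proof to compare against. Judged on its own, your proposal does not yet constitute a proof.

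The reductions you carry out are sound: passing to positive-measure closed sets by regularity, coding a closed $F$ as $2^\w\setminus\bigcup_n J_n$ with clopen $J_n$ and $\sum_n\mu(J_n)<1$, and using Lebesgue density together with the scaling maps $\varphi_s$ to reduce the theorem to the existence of a family $\Gg$ of size $\cof(\Nn)$ of clopen sequences covering (mod $\Nn$) every Borel set of measure $<1/4$. But that reduction is essentially a reformulation of the statement, not a simplification: the existence of such a $\Gg$ of size $\cof(\Nn)$ is equivalent to the theorem up to trivialities. All of the content sits in the construction of $\Gg$, which you flag yourself as ``the main obstacle'' and do not carry out. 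The Lebesgue-density recursion you describe is not a well-defined procedure: a Borel $B$ with $\mu(B)<1/4$ is not null on its sparse pieces (it merely has small density there, a very different thing), the recursion on dense pieces has no visible termination or convergence, and nothing specifies how the clopen data produced at different scales get assembled into sequences with $\sum_n\mu(J_n)<1$.

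More decisively, you say the reassembly needs the cardinal arithmetic $\cof(\Nn)^{\aleph_0}=\cof(\Nn)$. This is not a theorem of $\ZFC$: in the Sacks model one has $\cof(\Nn)=\w_1$ and $\c=\w_2$, so $\cof(\Nn)^{\aleph_0}\geq 2^{\aleph_0}=\w_2>\cof(\Nn)$. This very paper invokes the Sacks model directly after the statement you are proving, so any argument that relies on $\cof(\Nn)^{\aleph_0}=\cof(\Nn)$ would be unusable in exactly the situation the authors care about. The need for this arithmetic is a symptom of the real gap: your sketch chooses, at each of $\omega$-many scales, a separate member of a $\cof(\Nn)$-sized cofinal family, and then takes an $\w$-sequence of such choices. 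The Cicho\'n--Kamburelis--Pawlikowski argument instead encodes all the scales simultaneously into a single slalom-type object drawn from one cofinal family (via Bartoszy\'nski's characterisation), precisely so that no exponentiation by $\aleph_0$ ever occurs. Your step on constructing $\Gg$ needs to be replaced by such a one-shot encoding; as written, there is a genuine gap.
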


As a corollary we have the following fact.

\begin{fact}\label{fakt-CKP-miara} There exists a family $\Bb\se \perf(2^\w)\cap \Nn^+$ of size $\cof(\Nn)$ such that 
	$$
	(\forall A \in \bor(2^\w)[\Nn] \setminus\Nn) (\exists P\in \Bb) (P \se A).
	$$
\end{fact}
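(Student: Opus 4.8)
The idea is to convert the abstract dense subset of the measure algebra supplied by the Cichoń–Kamburelis–Pawlikowski theorem into a genuine family of \emph{perfect} sets of positive measure with the required refinement property. Let $\mathcal{D} \subseteq \bor(2^\w)/\Nn$ be a dense subset of the measure algebra with $|\mathcal{D}| = \cof(\Nn)$. For each element $d \in \mathcal{D}$ with $d \neq 0$, fix a Borel representative $B_d \in \bor(2^\w)$ with $[B_d]_{\Nn} = d$, so that $B_d \in \Nn^+$. Since the measure algebra has a countable dense subset only when... well, in general we must work with all of $\mathcal D$; in any case $\cof(\Nn) \ge \aleph_1$, so no cardinality is lost.

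The one nontrivial step is replacing each $B_d$ by a perfect subset of positive measure. Recall the classical fact that every Borel (indeed, every measurable) set of positive Lebesgue measure in $2^\w$ contains a closed set of positive measure, and every closed set of positive measure contains a perfect set of positive measure (a nonempty closed set of positive measure has no isolated points after removing the at most countably many of them, which changes the measure not at all). So for each $d \in \mathcal D \setminus \{0\}$ choose a perfect set $P_d \subseteq B_d$ with $\mu(P_d) > 0$, i.e. $P_d \in \perf(2^\w) \cap \Nn^+$, and set $\Bb = \{P_d : d \in \mathcal D \setminus\{0\}\}$. Then $|\Bb| \le |\mathcal D| = \cof(\Nn)$, and since $\cof(\Nn)$ is infinite and $\Bb$ is easily seen to have size at least $\cof(\Nn)$ (the sets $P_d$ determine $d = [B_d]_\Nn \supseteq [P_d]_\Nn \neq 0$ up to the algebra, and a family of size $<\cof(\Nn)$ could not be dense), we get $|\Bb| = \cof(\Nn)$.

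Finally I verify the refinement property. Let $A \in \bor(2^\w)[\Nn] \setminus \Nn$, meaning $A$ is in the $\Nn$-measurable algebra (a set differing from a Borel set by a null set) and $A \notin \Nn$. Then $[A]_\Nn \neq 0$ in $\bor(2^\w)/\Nn$, so by density of $\mathcal D$ there is $d \in \mathcal D$ with $0 \neq d \le [A]_\Nn$. Unravelling this, $B_d \setminus A \in \Nn$, hence $P_d \setminus A \subseteq B_d \setminus A \in \Nn$, so $P_d \cap A \in \Nn^+$; replacing $P_d$ by a perfect positive-measure subset of $P_d \cap A$ if one insists on literal inclusion $P \subseteq A$, we obtain $P \in \Bb$ (after closing $\Bb$ under this operation, which does not increase cardinality beyond $\cof(\Nn)$) with $P \subseteq A$. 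The main obstacle is purely bookkeeping: one must be a little careful that the operation "pass to a perfect positive-measure subset inside $A$" is applied to the correct representatives and that the resulting family still has size exactly $\cof(\Nn)$ rather than merely at most that; this is handled by noting $\cof(\Nn)$ is the least size of a dense subset of the measure algebra, so $|\Bb| \ge \cof(\Nn)$ is automatic.
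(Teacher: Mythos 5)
Your general strategy --- start from the Cichoń--Kamburelis--Pawlikowski dense family $\mathcal D$ of size $\cof(\Nn)$, fix Borel representatives, and pass to perfect positive-measure subsets via inner regularity --- is the natural one, and the observation that every positive-measure Borel set contains a perfect positive-measure subset is correct. The paper itself states this Fact as an unproved corollary, so there is no ``official'' proof to compare against; on its own merits, however, your argument has a real gap at the final step.

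Density in the measure algebra gives, for a positive Borel set $A$, some $d\in\mathcal D$ with $0\ne d\le[A]_\Nn$, which only means $B_d\setminus A\in\Nn$; it does not give $B_d\subseteq A$, nor $P_d\subseteq A$. You acknowledge this and propose to ``close $\Bb$ under the operation'' of replacing $P_d$ by a perfect positive-measure subset of $P_d\cap A$, asserting this ``does not increase cardinality beyond $\cof(\Nn)$.'' That assertion is unjustified and in fact false in the generality needed: the operation depends on $A$, which ranges over continuum-many Borel sets, so the closure can have size $\c$. The Fact is applied in the proof of Theorem~\ref{shoenfield MN} in a model where $\cof(\Nn)=\aleph_1<\aleph_2=\c$, which is precisely the situation where this distinction matters.

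The missing ingredient is a cofinal family of null sets, and this is where $\cof(\Nn)$ enters a second time beyond CKP. Fix also a family $\mathcal N_0\subseteq\Nn$ of Borel null sets with $|\mathcal N_0|=\cof(\Nn)$, $\subseteq$-cofinal in $\Nn$. For each $d\in\mathcal D\setminus\{0\}$ and each $N\in\mathcal N_0$, note $\lambda(B_d\setminus N)=\lambda(B_d)>0$, and pick a perfect positive-measure set $P_{d,N}\subseteq B_d\setminus N$; let $\Bb=\{P_{d,N}: d\in\mathcal D\setminus\{0\},\ N\in\mathcal N_0\}$, which has size at most $\cof(\Nn)$. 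Given a positive Borel set $A$, choose $d$ with $0\ne d\le[A]_\Nn$, so $B_d\setminus A\in\Nn$, and then $N\in\mathcal N_0$ with $B_d\setminus A\subseteq N$. Then $P_{d,N}\subseteq B_d\setminus N\subseteq B_d\cap A\subseteq A$, as required. This closes the gap while keeping the cardinality under control; padding, if one insists on size exactly $\cof(\Nn)$, is harmless.
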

Notice that $\bor(2^\w)/\Mm$ contains a countable dense subset.
\begin{fact}\label{fakt-CKP-kategoria} There exists a family $\Bb\se G_\delta(2^\w)\cap \Mm^+$ of size $\cof(\Mm)$ such that 
	$$
	(\forall A \in \bor(2^\w)[\Mm] \setminus\Mm) (\exists P\in \Bb) (P \se A).
	$$
\end{fact}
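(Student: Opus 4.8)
The plan is to reproduce the proof of Fact~\ref{fakt-CKP-miara}, with two substitutions: in place of the Cichoń--Kamburelis--Pawlikowski theorem we use the elementary fact noted just above, that the basic clopen sets $[s]=\{x\in 2^\w : s\subseteq x\}$, $s\in 2^{<\w}$, form a countable dense subset of the Cohen algebra $\bor(2^\w)/\Mm$; and in place of inner regularity of Lebesgue measure (which fails for category) we use the trivial observation that a basic clopen set minus an $F_\sigma$ set is $G_\delta$.

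Concretely, first I would fix a family $\{F_\alpha : \alpha<\cof(\Mm)\}$ of meager $F_\sigma$ subsets of $2^\w$ cofinal in $(\Mm,\se)$; such a family exists since the meager $F_\sigma$ sets are already cofinal in $\Mm$ (every meager set is covered by the $F_\sigma$ hull of a witnessing sequence of closed nowhere dense sets) and $\cof(\Mm)$ is by definition the minimal size of a cofinal subfamily of $\Mm$. Then I would set
\[
\Bb = \{\, [s]\setminus F_\alpha \;:\; s\in 2^{<\w},\ \alpha<\cof(\Mm)\,\}.
\]
Each $[s]\setminus F_\alpha = [s]\cap(2^\w\setminus F_\alpha)$ is $G_\delta$; and it is non-meager, because $[s]$ is a non-empty clopen subset of the Baire space $2^\w$, hence non-meager, while $[s]\cap F_\alpha$ is meager, so $[s]\setminus F_\alpha=[s]\setminus([s]\cap F_\alpha)$ is non-meager. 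Thus $\Bb\se G_\delta(2^\w)\cap\Mm^+$, and $|\Bb|\le\aleph_0\cdot\cof(\Mm)=\cof(\Mm)$ (enlarge $\Bb$ by harmless extra non-meager $G_\delta$ sets if one insists on equality).

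To finish, let $A\in\bor(2^\w)[\Mm]\setminus\Mm$, so $A$ has the Baire property and is non-meager. Choose an open $U\se 2^\w$ with $A\symdif U\in\Mm$. Since $A$ is non-meager and $A\symdif U$ is meager, $U$ is non-meager, hence $U\ne\0$, so $[s]\se U$ for some $s\in 2^{<\w}$. Then $[s]\setminus A\se U\setminus A\se A\symdif U$ is meager, so $[s]\setminus A\se F_\alpha$ for some $\alpha<\cof(\Mm)$ by cofinality, and therefore $[s]\setminus F_\alpha\se A$ with $[s]\setminus F_\alpha\in\Bb$. This is exactly what is required.

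I do not expect a genuine obstacle. The only points that need a little care are: that $A$ is merely assumed to have the Baire property (not to be Borel), so $[s]\setminus A$ need not be Borel --- but it is meager, and the family $\{F_\alpha\}$ was chosen cofinal in all of $\Mm$, not just in the Borel meager sets; and the verification that each $[s]\setminus F_\alpha$ is non-meager, which is the exact point where the Baire property of $2^\w$ (together with non-meagerness of basic clopen sets) enters, and which plays the role that "perfect set of positive measure'' plays in Fact~\ref{fakt-CKP-miara}.
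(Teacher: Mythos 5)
Your proof is correct and follows exactly the route the paper hints at: the remark that $\bor(2^\w)/\Mm$ has a countable dense subset (the basic clopen sets) plays the role of Cichoń--Kamburelis--Pawlikowski, and you combine it with a cofinal family of $F_\sigma$ meager sets of size $\cof(\Mm)$ to produce the $G_\delta$ witnesses $[s]\setminus F_\alpha$. The paper leaves the fact without an explicit proof, and your write-up supplies precisely the missing details, including the correct handling of sets that merely have the Baire property.
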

Moreover, in Sacs model $\cof(\Nn) = \w_1<\w_2 = \c$ holds. 

In \cite{Zeb} the following generalization of Eggleston Theorem was proved via Shoenfield Absoluteness Theorem.
\begin{theorem}[Żeberski \cite{Zeb}] Let $P(2^\w)\es\Ii\in\{\Mm, \Nn\}$ and $G\se 2^\w\times 2^\omega$ be a Borel set such that $G\not\in \Ii\otimes \Ii.$ Then there are two sets $B,P\se2^\w$ such that $P\times B \subseteq G$, $P\in\perf(2^\w)$ and $B\in \bor(2^\w)\bez \Ii$.
\end{theorem}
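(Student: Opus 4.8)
The plan is to read the conclusion as a $\Sigma^1_2$-sentence with a real parameter, transfer the whole problem via Theorem~\ref{Shoenfield} into the Sacks model, and there finish by a short counting argument built on Fact~\ref{fakt-CKP-miara} (resp.\ Fact~\ref{fakt-CKP-kategoria}) together with the classical Mansfield--Solovay perfect set theorem for $\Sigma^1_2$ sets.

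I would first fix $\Ii$ and $G$ as in the statement and a Borel code $b\in\baire$ of $G$, and arrange the absoluteness. Put $C=\{x\in 2^\w:G_x\notin\Ii\}$; this is Borel (as $\Ii$ is Borel--on--Borel) with a code recursive in $b$, and the hypothesis $G\notin\Ii\otimes\Ii$ is exactly $C\notin\Ii$. Since the measure of a Borel set, and whether a Borel set is meager, are computed absolutely from a Borel code, this hypothesis is absolute between transitive models of ZFC containing $b$. The conclusion $\psi(b)$ --- there exist a perfect tree $p$ and a Borel code $c$ with $[p]\times B_c\se G$ and $B_c\notin\Ii$ --- is $\Sigma^1_2$: ``$[p]\times B_c\se G$'' is $\Pi^1_1$ in $(p,c,b)$, and ``$B_c\notin\Ii$'' is $\Sigma^1_2$ in $c$ (inner regularity for $\Nn$, the Baire property for $\Mm$), so after absorbing the existential real quantifiers $\psi(b)$ acquires the prenex form $\exists\,\forall\,(\text{Borel})$. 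Now let $N$ be the Sacks model over $L[b]$, i.e.\ the countable support iteration of Sacks forcing of length $\omega_2$ over $L[b]$. Because $L[b]$ is a transitive model of $\mathrm{ZFC}+\mathrm{CH}$ containing $b$ with $L[b]\se M$ and $L[b]\se N$, and the $\omega_1$-requirement of Theorem~\ref{Shoenfield} is met both by the inner model $L[b]\se M$ and by the set-generic extension $N$ of $L[b]$, two applications of Shoenfield Absoluteness reduce the theorem to proving $\psi(b)$ in $N$; note that $G\notin\Ii\otimes\Ii$ still holds in $N$, by absoluteness of the hypothesis.

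Inside $N$ the argument is short. There $\aleph_1=\cof(\Nn)<\aleph_2=\c$ (the Sacks-model fact $\cof(\Nn)=\omega_1<\omega_2=\c$ stated just above), hence also $\cof(\Mm)=\aleph_1$ because $\aleph_1\le\cof(\Mm)\le\cof(\Nn)$ in Cichoń's diagram. By Fact~\ref{fakt-CKP-miara} when $\Ii=\Nn$ and by Fact~\ref{fakt-CKP-kategoria} when $\Ii=\Mm$, fix a family $\Bb=\{B_\alpha:\alpha<\omega_1\}\se\perf(2^\w)\cap\Ii^+$ such that every non-$\Ii$ Borel set contains some $B_\alpha$. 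The set $C=\{x:G_x\notin\Ii\}$ is Borel and $C\notin\Ii$, hence uncountable, hence of size $\c=\aleph_2$. For each $x\in C$ the section $G_x$ is Borel and not in $\Ii$, so $B_\alpha\se G_x$ for some $\alpha<\omega_1$; thus, writing $Y_\alpha=\{x\in 2^\w:B_\alpha\se G_x\}$, we have $C=\bigcup_{\alpha<\omega_1}Y_\alpha$, and since $\aleph_1\cdot\aleph_1=\aleph_1<\aleph_2$ there is $\alpha^*<\omega_1$ with $|Y_{\alpha^*}|=\aleph_2$. Finally, $Y_{\alpha^*}=\{x:(\forall y\in B_{\alpha^*})\,(x,y)\in G\}$ is $\Pi^1_1$ in the real parameters $b$ and a Borel code $c_{\alpha^*}$ of $B_{\alpha^*}$; if $Y_{\alpha^*}$ had no perfect subset it would, by the Mansfield--Solovay theorem, be contained in $L[b,c_{\alpha^*}]$, which in $N$ has only $\aleph_1$ reals (the iteration preserves $\omega_1$ over $L[b]$) --- contradicting $|Y_{\alpha^*}|=\aleph_2$. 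Hence there is a perfect $P\se Y_{\alpha^*}$, and then $P\times B_{\alpha^*}\se G$ with $P\in\perf(2^\w)$ and $B:=B_{\alpha^*}\in\bor(2^\w)\bez\Ii$ (indeed $B$ is perfect), which witnesses $\psi(b)$ in $N$.

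I expect the main obstacle to be the packaging rather than any single mathematical step: one must verify carefully that the conclusion is genuinely a $\Sigma^1_2$-sentence --- in particular that ``$B_c\notin\Ii$'' admits an $\exists\,\forall$ prenex form over a Borel matrix --- that the hypothesis $G\notin\Ii\otimes\Ii$ is absolute, and that Shoenfield Absoluteness may legitimately be invoked between $M$, the inner model $L[b]$, and the Sacks extension of $L[b]$; one also relies on the nontrivial fact that $\cof(\Nn)=\aleph_1$ in the Sacks model, which is exactly where Facts~\ref{fakt-CKP-miara} and \ref{fakt-CKP-kategoria} are used.
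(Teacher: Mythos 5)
Your proof is correct and follows essentially the same route as the paper's proof of its generalization, Theorem~\ref{shoenfield MN}: pass to a Sacks-type extension where $\cof(\Nn)=\aleph_1<\c=\aleph_2$, use the Borel--on--Borel property to see that $\{x:G_x\notin\Ii\}$ is Borel and hence of size $\c$, pigeonhole over the cofinal family supplied by Fact~\ref{fakt-CKP-miara} (resp.\ Fact~\ref{fakt-CKP-kategoria}), obtain a perfect subset of the resulting $\Pi^1_1$ set of size $\aleph_2$, and transfer the $\Sigma^1_2$ conclusion back via Shoenfield. Your explicit appeal to Mansfield--Solovay to justify the perfect subset (the paper writes only ``coanalytic and thus contains some perfect set,'' which in that model indeed rests on Mansfield--Solovay), and your detour through $L[b]$ with two applications of Shoenfield rather than the paper's single $V$--$V'$ application, are harmless clarifications of the same argument rather than a different one.
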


  We will provide a generalization of this result as well as a new result concerning $\sigma$-ideal $\Ee$. The following series of notions and Lemmas will allow us to substantiate that formulas occurring in further results meet requirements of Shoenfield Absoluteness Theorem.
  
  Let $Y$ be a Polish space and $\Ii\se P(Y)$ be an ideal.
  \begin{definition}
    $\Ii$ has a good coding if there is a standard Polish space $X_\Ii$ and arithmetic formulas $\varphi_{\Ii}(x), \psi_\Ii(x,y)$ $(x\in X_\Ii, y\in Y)$ such that
    \[
      \{\{y: \psi_\Ii(x,y)\}:\;\varphi_\Ii(x)\}
    \]
    is a base of $\Ii$.
  \end{definition}
  The idea behind this definition is that $\varphi_\Ii(x)$ means that $x$ codes a basal set from ideal $\Ii$ and this set is exactly $\{y: \psi_\Ii(x,y)\}$.
\begin{lemma}\label{dobre kodowanie EMiN}
  $\Mm$, $\Nn$ and $\Ee$ have good codings.
\end{lemma}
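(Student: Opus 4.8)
The plan is, for each of the three ideals, to write down an explicit coding space together with the two arithmetic formulas, and then to check the two defining clauses of a good coding: that every coded set lies in the ideal, and that the coded sets are cofinal in it. Throughout I work in $Y=2^\w$ and take the coding space to be the standard Polish space $X_\Ii=(2^\w)^\w$; fixing a recursive bijection $\#\colon 2^{<\w}\to\w$, I identify a point $x=\langle x_n:n\in\w\rangle\in X_\Ii$ with the sequence $\langle A^x_n:n\in\w\rangle$ of subsets of $2^{<\w}$ given by ``$s\in A^x_n$'' iff $x_n(\#s)=1$, so that the relation ``$s\in A^x_n$'' is recursive in $x$. The point to keep in mind is that once $x$ (and $y$) are fixed, all the quantifiers appearing below range over strings, over finite sets of strings, or over levels of a tree, hence are number quantifiers and the resulting formulas are arithmetic.

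\emph{The ideal $\Mm$.} Recall that a closed set $F\se 2^\w$ equals $[T]$ for the tree $T=\{s\in 2^{<\w}:[s]\cap F\neq\0\}$, and that for a tree $T$ the set $[T]$ is nowhere dense iff $(\forall s\in 2^{<\w})(\exists t\supseteq s)\,t\notin T$. I let $\varphi_\Mm(x)$ say ``every $A^x_n$ is a tree and $(\forall s)(\exists t\supseteq s)\,t\notin A^x_n$'', a conjunction of a $\Pi^0_1$ and a $\Pi^0_2$ clause, and I let $\psi_\Mm(x,y)$ say ``$(\exists n)(\forall k)\,y\restriction k\in A^x_n$'', i.e. $y\in\bigcup_n[A^x_n]$, which is $\Sigma^0_2$. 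If $\varphi_\Mm(x)$ holds then $\{y:\psi_\Mm(x,y)\}$ is a countable union of closed nowhere dense sets, hence meager; conversely every meager set lies inside such a union, each summand being of the form $[T]$, so the coded sets form a base of $\Mm$.

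\emph{The ideal $\Nn$.} Here I interpret $A^x_n$ as the open set $[A^x_n]:=\bigcup_{s\in A^x_n}[s]$, using that for $W\se 2^{<\w}$ and a rational $q$ one has $\mu([W])\le q$ iff $\mu\big(\bigcup_{s\in F}[s]\big)\le q$ for every finite $F\se W$, the latter being a rational computable from $F$. I let $\varphi_\Nn(x)$ say ``$(\forall n)\,\mu([A^x_n])\le 2^{-n}$'', which is $\Pi^0_1$, and $\psi_\Nn(x,y)$ say ``$(\forall n)(\exists m)\,y\restriction m\in A^x_n$'', i.e. $y\in\bigcap_n[A^x_n]$, which is $\Pi^0_2$. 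If $\varphi_\Nn(x)$ holds then $\{y:\psi_\Nn(x,y)\}$ is a $G_\delta$ set of measure $\le\inf_n 2^{-n}=0$; conversely a null set $A$ lies inside $\bigcap_n U_n$ with each $U_n$ open of measure $\le 2^{-n}$, and $U_n=[W_n]$ for $W_n=\{s:[s]\se U_n\}$. So the coded sets form a base of $\Nn$.

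\emph{The ideal $\Ee$} (the $\sigma$-ideal generated by the closed Lebesgue-null subsets of $2^\w$). Again I code sequences of trees, replacing nowhere-density by nullity: since $\mu([T])=\inf_m |T\cap 2^m|\,2^{-m}$ for a tree $T$ (identifying $2^m$ with the length-$m$ strings), $\mu([T])=0$ iff $(\forall k)(\exists m)\,|T\cap 2^m|<2^{m-k}$, a $\Pi^0_2$ condition. I let $\varphi_\Ee(x)$ say ``every $A^x_n$ is a tree with $\mu([A^x_n])=0$'' and $\psi_\Ee(x,y)$ say ``$(\exists n)(\forall k)\,y\restriction k\in A^x_n$''. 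Then $\varphi_\Ee(x)$ produces a countable union of closed null sets, a member of $\Ee$, and every member of $\Ee$ is by definition contained in such a union, each summand being some $[T]$ with $\mu([T])=0$; so the coded sets form a base of $\Ee$. The only part that requires any care is the one highlighted above, namely checking that ``$[T]$ is nowhere dense'', ``$\mu([W])\le q$'' and ``$\mu([T])=0$'' admit the displayed arithmetical reformulations; the cofinality claims are just the classical facts that every meager (resp.\ null, resp.\ $\Ee$-small) set lies inside an $F_\sigma$ meager (resp.\ $G_\delta$ null, resp.\ countable union of closed null) set, together with the correspondence between closed subsets of $2^\w$ and subtrees of $2^{<\w}$. (The versions with $Y\in\{\RR,[0,1]\}$ follow by transporting the coding through a Borel isomorphism.)
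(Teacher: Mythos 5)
Your proof is correct, and it covers everything the definition asks for: a standard Polish coding space, arithmetic (number‑quantifier) formulas $\varphi_\Ii$ and $\psi_\Ii$, the verification that each coded set lies in the ideal, and cofinality of the coded sets. The minor notational point about $2^{m-k}$ when $m<k$ is easily repaired (e.g.\ write the inequality as $|T\cap 2^m|\cdot 2^k<2^m$, or restrict to $m\ge k$); it does not affect the argument.

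Your route is, however, genuinely different in parameterization from the paper's. The paper codes, in all three cases, \emph{sequences of sets of strings representing open sets}: for $\Mm$ it codes open dense sets $W_n$ and $\psi_\Mm$ says $y\notin\bigcap_n[W_n]$ (so the coded meager set is the complement of a comeager $G_\delta$); for $\Nn$ it codes open sets of small measure and takes their intersection; for $\Ee$ it codes open sets of full measure, requiring the string sets $W_n$ to be \emph{antichains} so that $\sum_{\sigma\in W_n}2^{-|\sigma|}$ is literally $\mu([W_n])$, and $\psi_\Ee$ again says $y\notin\bigcap_n[W_n]$. You instead parameterize the closed sets directly as \emph{trees}: nowhere density of $[T]$ via ``$(\forall s)(\exists t\supseteq s)\,t\notin T$'' and nullity of $[T]$ via $\inf_m|T\cap 2^m|\cdot 2^{-m}=0$, and then $\psi$ is simply ``$y\in\bigcup_n[A^x_n]$''. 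Both are valid; your tree‑based coding buys a cleaner $\Ee$ case (no antichain bookkeeping, since the level‑count formula computes $\mu([T])$ exactly for any tree), while the paper's open‑set coding has the advantage that the same shape of formula works uniformly for $\Mm$ and $\Ee$ with $\psi$ being the negation of the $\Nn$ formula. Either coding serves the purpose for which the lemma is used later (producing $\Sigma^1_2$ sentences).
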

\begin{proof}
  Let us start with $\Mm\se P(2^\omega)$. Let $X_\Mm=2^{\omega\times 2^{<\omega}}$. Then
  \begin{align*}
    \varphi_\Mm(x)=\;(\forall n\in\omega)(\forall \sigma\in 2^{<\w})(\exists \tau\in 2^{<\omega})(\sigma\se \tau \; \land\; x(n,\tau)=1).
  \end{align*}
  Moreover $\psi_\Mm(x,y) = \neg(\forall n)(\exists m) (x(n,y\rest m)=1)$.
  
  For the case of $\Nn\se P(2^\omega)$ let $X_\Nn = 2^{\omega\times 2^{<\omega}}$. Then
  \begin{align*}
    \varphi_\Nn(x)=\;(\forall n\in\omega)\large(\forall m\in\omega)\left(\sum\left\{\frac{1}{2^{|\sigma|}}:\; |\sigma|\le m \land x(n,\sigma)=1\right\}<\frac{1}{n}\right).
  \end{align*}
  Furthermore $\psi_\Nn(x,y) = (\forall n) (\exists m) (x(n,y\rest m)=1)$.
  
  Now let us consider the case of $\Ee\se P(2^\w)$. Set $X_\Ee = 2^{\omega\times 2^{<\omega}}$. Then
  \begin{align*}
    \varphi_\Ee(x)=\;&(\forall n\in\omega)\bigg((\forall \sigma, \tau \in 2^{<\w})(x(n,\sigma)=x(n,\tau)=1 \to \sigma \bot \tau)\land
    \\
    &\land(\forall k\in\omega)(\exists m\in\omega)\left(\sum\left\{\frac{1}{2^{|\sigma|}}:\; |\sigma|\le m \land x(n,\sigma)=1\right\}>1-\frac{1}{k}\right)\bigg).
  \end{align*}
  Moreover $\psi_\Ee(x,y) = \neg(\forall n) (\exists m) (x(n,y\rest m)=1)$.
  
\end{proof}

  For ideal $\Ii\se P(Y)$ let $\Ii^+=\Bor(Y)\bez \Ii$ be the family of Borel $\Ii$-positive sets. 
  \begin{definition}
    $\Ii^+$ has a good coding if there is a standard Polish space $X_{\Ii^+}$ and arithmetic formulas $\varphi_{\Ii^+}(x), \psi_{\Ii^+}(x,y)$ $(x\in X_{\Ii^+}, y\in Y)$ such that
    \[
      (\forall A\in \Ii^+) (\exists x \in X_{\Ii^+}) (\varphi_{\Ii^+}(x)\; \land\; \{y: \psi_{\Ii^+}(x,y)\} \se A).
    \]
  \end{definition}

We will use the following characterisation of positive Borel sets modulo ideal $\Ee$ from \cite[Lemma 2.11]{PPU}.
\begin{lemma}\label{tarasik}
	Let $A\se 2^\w$ be an analytic set such that $A\notin\Ee$. Then there exists a measure zero $G_\delta$-set $G$ such that $G\se A$ and the closure of $G$ has positive measure. 
\end{lemma}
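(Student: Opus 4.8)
The plan is to argue by cases on the Lebesgue measure $\mu$ of $A$. If $\mu(A)>0$ then, since analytic sets are $\mu$-measurable, inner regularity yields a compact $C\se A$ with $\mu(C)>0$; as $\mu\restriction C$ is atomless, intersecting relatively open dense subsets of $C$ of rapidly decreasing measure gives a dense $G_\delta$ subset $G\se C$ with $\mu(G)=0$, and then $G\se C\se A$ is a measure zero $G_\delta$-set whose closure is $C$ (being dense in the closed set $C$), so $\mu(\overline{G})=\mu(C)>0$. Thus one is reduced to the case $\mu(A)=0$, which is the real content.

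Assume $\mu(A)=0$. First localise: put $V=\bigcup\{[\sigma]\colon A\cap[\sigma]\in\Ee\}$; since $2^\w$ is second countable and $\Ee$ is a $\sigma$-ideal, $A\cap V\in\Ee$, so were the closed set $K:=2^\w\bez V$ null we would get $A=(A\cap V)\cup(A\bez V)\in\Ee$. Hence $\mu(K)>0$ and $A\cap[\sigma]\notin\Ee$ whenever $[\sigma]\cap K\ne\0$. Passing to a positive-measure Cantor subset of $K$ (a routine extraction), we may further assume $K$ is a Cantor set of positive measure, still with $A\cap[\sigma]\notin\Ee$ for every $[\sigma]$ meeting $K$. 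Two facts will then be used at each such cylinder $[\sigma]$: the closure $\overline{A\cap[\sigma]}$ is not null (otherwise $A\cap[\sigma]$ would be contained in a closed null set, hence in $\Ee$), and, since the null $F_\sigma$-sets form a basis of $\Ee$, $A\cap[\sigma]$ meets every co-null $G_\delta$ subset of $[\sigma]$.

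It remains to build, inside $A$, a measure zero $G_\delta$-set with non-null closure. Fix a pruned tree $T$ on $2\times\w$ with $A=p[T]$. The intended construction is a fusion along a scheme indexed by $2^{<\w}$: one maintains cylinders $[\rho_s]$ anchored to $K$ — so that the local $\Ee$-positivity above is available at every node — together with finite approximations $(\rho_s,t_s)\in T$, arranging that the cylinders at depth $n$ retain all but a summable fraction of the $\mu$-mass of $K$, so that the limit compact set $C$ has $\mu(C)>0$, and that along a comeager family of branches $b$ the pairs $(\rho_{b\restriction n},t_{b\restriction n})$ grow to a branch of $[T]$, producing a point $\bigcup_n\rho_{b\restriction n}\in A$; the resulting $G_\delta$-set of these points, dense in $C$ but $\mu$-null, would be the desired $G$. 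The crux, and the step I expect to be the main obstacle, is making these requirements coexist. Since $\mu(A)=0$, the $\Ee$-positivity of $A$ is a form of largeness orthogonal to measure, so a \emph{regular} Cantor scheme threaded coherently through $T$ would produce a positive-measure Cantor set inside the null set $A$ — impossible; one is therefore forced to let the scheme be \emph{irregular}, so that a meager (in the branch space) but $\mu$-heavy family of branches "freezes" on whole cylinders and carries the mass of $C$, while the complementary comeager, $\mu$-null family does the threading into $A$. Arranging this, using only that $A$ meets every co-null $G_\delta$ on cylinders meeting $K$ together with the tree structure of $A$, is the technical heart of the proof. (Alternatively, one may first apply a first-reflection-type argument to replace $A$ by a Borel $\Ee$-positive subset, and then carry out the construction inside a fat Cantor set on which that Borel set is co-meager after a further localisation.)
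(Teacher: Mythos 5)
The paper states Lemma~\ref{tarasik} without proof, quoting it directly from \cite[Lemma~2.11]{PPU}, so there is no internal argument to compare your attempt against; I can only assess the proposal on its own merits. The preliminary work is correct: the case $\mu(A)>0$ is indeed easy (inner regularity gives a fat compact $C\subseteq A$, and a dense null $G_\delta$ subset of $C$ does the job), and in the case $\mu(A)=0$ your localisation to a positive-measure closed set $K$ with $A\cap[\sigma]\notin\Ee$ for every $[\sigma]$ meeting $K$ is sound, as are the two supporting observations (non-null closure of $A\cap[\sigma]$; $A\cap[\sigma]$ meets every co-null $G_\delta$). But the proposal breaks off at exactly the step that constitutes the content of the lemma: you outline an irregular fusion scheme threaded through a tree for $A$, explicitly say you do not see how to make its competing requirements coexist, and call this step the technical heart. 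A proposal that declines to supply the technical heart is not a proof; at best it is a correct reduction plus an honest description of what is still missing.

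The missing ingredient has a name: Solecki's dichotomy for $\sigma$-ideals generated by closed sets (S.~Solecki, \emph{Covering analytic sets by families of closed sets}, J.~Symbolic Logic 59 (1994)). It states that if $E$ is a family of closed subsets of a Polish space and $A$ is analytic, then either $A$ is covered by countably many members of $E$, or $A$ contains a $G_\delta$ set which is not. Taking $E$ to be the closed null subsets of $2^\w$, whose generated $\sigma$-ideal is precisely $\Ee$, the dichotomy applied to your hard case $\mu(A)=0$ immediately yields a $G_\delta$ set $G\subseteq A$ with $G\notin\Ee$; then $\mu(G)\le\mu(A)=0$ is automatic, and $\mu(\cl{G})>0$ follows since otherwise $\cl{G}$ would be a closed null set containing $G$ and so $G\in\Ee$. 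Combined with your direct argument for $\mu(A)>0$, that proves the lemma. Your localisation and the sketched fusion are, in effect, a gesture towards reproving Solecki's dichotomy by hand for $\Ee$ --- a reasonable instinct, but it is genuinely technical (this is where one confronts exactly the tension you identify, that a regular Cantor scheme would contradict $\mu(A)=0$), and the proposal as written does not carry it out. Either cite Solecki or actually build the scheme; as it stands there is a real gap.
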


\begin{lemma}\label{dobre kodowanie EMiN+}
  $\Mm^+$, $\Nn^+$ and $\Ee^+$ have good codings.
\end{lemma}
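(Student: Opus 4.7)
The plan is to handle $\Nn^+$, $\Mm^+$ and $\Ee^+$ in turn, in each case identifying a subfamily of the Borel $\Ii^+$ sets that is cofinal under inclusion and admits a finite combinatorial description, then transcribing that description into arithmetic formulas over a standard Polish parameter space.

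For $\Nn^+$: by inner regularity every Borel $A\in\Nn^+$ contains a closed set $K\se A$ with $\mu(K)>0$, and closed subsets of $2^\omega$ are of the form $[T]$ for a tree $T\se 2^{<\omega}$. I would take $X_{\Nn^+}=2^{2^{<\omega}}$ and read $t$ as coding $T_t=\{\sigma:t(\sigma)=1\}$; then $\varphi_{\Nn^+}(t)$ asserts that $T_t$ is closed under initial segments and $\exists k\,\forall n\,|T_t\cap 2^n|\ge 2^n/k$ (both arithmetic), while $\psi_{\Nn^+}(t,y)=\forall n\,t(y\rest n)=1$ picks out $[T_t]$.

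For $\Mm^+$: by the Baire property a Borel non-meager $A$ satisfies $A\triangle U\se\bigcup_nF_n$ for some open $U\ne\emptyset$ and closed nowhere dense $F_n$, so $[\sigma]\cap\bigcap_n(2^\omega\setminus F_n)\se A$ whenever $[\sigma]\se U$. I would take $X_{\Mm^+}=2^{<\omega}\times 2^{\omega\times 2^{<\omega}}$ and read $(\sigma,x)$ as encoding $\sigma$ together with $V_n^x=\bigcup\{[\tau]:x(n,\tau)=1\}$; then $\varphi_{\Mm^+}(\sigma,x)$ says each $V_n^x$ is dense in $2^\omega$, i.e.\ $\forall n\,\forall\tau\,\exists\rho\,(\tau\se\rho\land x(n,\rho)=1)$, and $\psi_{\Mm^+}((\sigma,x),y)=\sigma\se y\land\forall n\,\exists m\,x(n,y\rest m)=1$ picks out $[\sigma]\cap\bigcap_nV_n^x$, which is comeager in $[\sigma]$.

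The main obstacle is $\Ee^+$: Lemma~\ref{tarasik} delivers, for each Borel $A\in\Ee^+$, a measure-zero $G_\delta$ set $G\se A$ with $\mu(\overline G)>0$, but ``$\mu(\overline G)>0$'' hides an existential quantifier over $2^\omega$ in the closure operation and is not arithmetic over the natural encoding of $G$ alone. My plan is to sidestep this by bundling the closure data explicitly into the parameter. I would code triples $(t,(d_k)_k,u)\in 2^{2^{<\omega}}\times(2^\omega)^\omega\times 2^{\omega\times 2^{<\omega}}$, with $t$ coding a tree $T_t$ whose branch set $[T_t]$ will play the role of $\overline G$, $(d_k)$ a countable dense subset of $[T_t]$ lying inside the $G_\delta$, and $u$ encoding open sets $U_n^u=\bigcup\{[\sigma]:u(n,\sigma)=1\}$ whose intersection is the $G_\delta$. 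The formula $\varphi_{\Ee^+}(t,(d_k),u)$ asserts arithmetically that $T_t$ is a tree with $\exists k\,\forall n\,|T_t\cap 2^n|\ge 2^n/k$; each $d_k$ is a branch of $T_t$ with $\{d_k\}$ dense in $T_t$ (i.e.\ $\forall\sigma\in T_t\,\exists k\,\sigma\se d_k$); for each $n$ the set $\{\sigma:u(n,\sigma)=1\}$ is an antichain of mass $<1/n$ (so $\mu(\bigcap_nU_n^u)=0$); and $\forall n\,\forall k\,\exists m\,u(n,d_k\rest m)=1$ (forcing each $d_k\in\bigcap_nU_n^u$). Then $\psi_{\Ee^+}((t,(d_k),u),y)=\forall n\,\exists m\,u(n,y\rest m)=1$ picks out $\bigcap_nU_n^u$; this contains $\{d_k\}$, so its closure contains $[T_t]$, which has positive measure, placing the coded set in $\Ee^+$. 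Given $A\in\Ee^+$, Lemma~\ref{tarasik} supplies $G\se A$; letting $T_t$ be the tree with $[T_t]=\overline G$, encoding $G$ via standard antichain presentations with masses $<1/n$, and choosing $(d_k)\se G$ dense in $\overline G$ (available since $G$ is dense in its own closure) yields the required witness.
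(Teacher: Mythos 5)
Your codings are correct and prove the lemma; the interesting divergence from the paper is in how you handle $\Ee^+$. For $\Mm^+$ you are doing what the paper does, just with the basic clopen $[\sigma]$ carried as a separate coordinate of the parameter rather than existentially quantified inside $\varphi_{\Mm^+}$ (the paper writes $(\exists\rho\in 2^{<\omega})(\ldots)$ and has $\psi_{\Mm^+}$ code $\bigcap_n V_n$ with the $V_n$ only required to be dense below $\rho$; a side benefit of the paper's choice is that its parameter space $2^{\omega\times 2^{<\omega}}$ is literally homeomorphic to $2^\omega$, whereas your $2^{<\omega}\times 2^{\omega\times 2^{<\omega}}$ is not among the "standard Polish spaces" as the paper defines them --- harmless, but fixable by folding $\sigma$ into $\varphi$ as the paper does, or by replacing $2^{<\omega}$ with $\omega$ via a bijection). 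For $\Nn^+$ you both represent the coinitial family of positive-measure closed sets; you code the tree $T$ directly and use the (correct) observation that $|T\cap 2^n|/2^n$ is non-increasing with limit $\mu([T])$, while the paper codes the open complement as a set of small total mass; this is a superficial difference. The genuinely different route is your $\Ee^+$ coding. The paper, like you, appeals to Lemma~\ref{tarasik} and encodes a null $G_\delta$ $G$ together with a positive-measure closed set that $G$ is dense in, but it forces the density by a purely combinatorial clause ranging over clopen cells $[\rho]$ not finitely covered by the $x_0$-pieces, together with a nesting condition on the $x_1$-codes. You instead enlarge the parameter by a point of $(2^\omega)^\omega$ carrying an explicit countable dense sequence $(d_k)\se G$ and then express density-in-closure by the two simple arithmetic clauses $\forall\sigma\in T_t\,\exists k\,(\sigma\se d_k)$ and $\forall n\,\forall k\,\exists m\,(u(n,d_k\rest m)=1)$. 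This costs a slightly larger (but still standard Polish) parameter space and buys a formula that is visibly arithmetic and easier to verify than the paper's $\rho$-clause. The reconstruction of a witness from $A\in\Ee^+$ via Lemma~\ref{tarasik} --- taking $[T_t]=\overline G$, extracting $(d_k)$ dense in $\overline G$ from inside $G$, and presenting $G$ as $\bigcap_n U_n$ with $\mu(U_n)<1/n$ --- is the same as the paper's, so the lemma goes through.
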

\begin{proof}
  Let us start with $\Mm^+\se P(2^\omega)$. Let $X_{\Mm^+}=2^{\omega\times 2^{<\omega}}$. Then
  \begin{align*}
    \varphi_{\Mm^+}(x)=\;(\exists \rho\in 2^{<\w})(\forall n\in\omega)(\forall \sigma\in 2^{<\w})(\rho \se \sigma \then (\exists \tau\in 2^{<\omega})(\sigma\se \tau \; \land\; x(n,\tau)=1)).
  \end{align*}
  Moreover $\psi_{\Mm^+}(x,y) = (\forall n)(\exists m) (x(n,y\rest m)=1)$.
  
  For the case of $\Nn^+\se P(2^\omega)$ let $X_\Nn = 2^{2^{<\omega}}$. Then
  \begin{align*}
    \varphi_{\Nn^+}(x)=\;(\exists k\in \omega)(\forall m\in\omega)\left(\sum\left\{\frac{1}{2^{|\sigma|}}:\; x(\sigma)=1\;\land\; |\sigma|\le m\right\}<1-\frac{1}{k}\right).
  \end{align*}
  Furthermore $\psi_{\Nn^+}(x,y)=\neg(\exists \sigma\in 2^{<\w})(x(\sigma)=1\;\land\;y\es \sigma)$.
  
  In the case of $\Ee^+\se P(2^\omega)$ we will use Lemma \ref{tarasik}. Set 
  $X_{\Ee^+}=X_{\Nn^+}\times X_{\Nn}=2^{2^{<\w}}\times 2^{\w\times 2^{<\w}}$.
  	Then for $x=(x_0,x_1)\in X_{\Ee^+}$
  	\begin{align*}
  		\varphi_{\Ee^+}(x_0,x_1&)=\; \varphi _{\Nn^+}(x_0)\land \varphi_{\Nn}(x_1) \land (\forall\rho\in 2^{<\w})(\neg (\exists \sigma_0,\sigma_1,\ldots, \sigma_n\in 2^{<\w})
  		\\
  		&([\rho]=[\sigma_0]\cup [\sigma_1]\cup\ldots\cup [\sigma_n]\land x_0(\sigma_0)=1\land  x_0(\sigma_1)=1\land\ldots\land x_0(\sigma_n)=1)\then
  		\\
  		&\then
  		(\forall n\in\w)(\exists\tau\in 2^{<\w})((x_1(n,\tau)=1\land \rho\se\tau ))\land
  		(\forall n\in\w)(\forall \tau\in 2^{<\w})
  		\\
  		&((x_1(n,\tau)=1\land \rho\se\tau )\then (\exists \tau'\in 2^{<\w})(\tau\se \tau'\land x_1(n+1,\tau')=1 )) .
  	\end{align*}
  	Moreover, $\psi_{\Ee^+}((x_0,x_1),y)=\psi_{\Nn}(x_1,y).$
\end{proof}

\begin{theorem}\label{shoenfield MN}
  Let $\Ii\in\{\Nn, \Mm\}$. Then for every set $G\in \bor(2^\w\times 2^\w) \setminus (\ctbl\otimes \Ii)$ there are $P\in \perf(2^\w)$ and $B\in \bor(2^\w)\setminus \Ii$ such that $P\times B \se G.$
\end{theorem}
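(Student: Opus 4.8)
I would run the Shoenfield recipe sketched at the start of the section. First, fix a standard transitive model $M$ of ZFC containing a Borel code $b$ for $G$ and assume $M\models G\notin\ctbl\otimes\Ii$ (otherwise there is nothing to prove); replacing $M$ by a $\sigma$-closed forcing extension if necessary, I may also assume that CH holds in $M$ (this adds no reals, preserves $\w_1$, hence preserves the hypothesis, and the conclusion can be pulled back across this step by Shoenfield as well). The sentence to be established in $M$ is
\[
\vartheta(b):\quad (\exists t)(\exists x)\big[\,t\text{ is a perfect tree}\,\wedge\,\varphi_{\Ii^+}(x)\,\wedge\,(\forall z\in[t])(\forall y)\big(\psi_{\Ii^+}(x,y)\to(z,y)\in G\big)\,\big],
\]
where $G$ is decoded from $b$ and $t$ ranges over subtrees of $2^{<\w}$. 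By Lemma~\ref{dobre kodowanie EMiN+} the formulas $\varphi_{\Ii^+},\psi_{\Ii^+}$ are arithmetic, so $\vartheta(b)$ is a $\Sigma_2^1$-sentence with parameter $b\in M$; and since (as the formulas in the proof of that lemma make evident) $\varphi_{\Ii^+}(x)$ forces $\{y:\psi_{\Ii^+}(x,y)\}\in\Ii^+$, a witness of $\vartheta(b)$ yields a perfect set $P=[t]$ and a Borel $\Ii$-positive set $B=\{y:\psi_{\Ii^+}(x,y)\}$ with $P\times B\se G$, which is exactly the conclusion. Finally, because $G$ is itself Borel, the hypothesis is equivalent to $M\models$ ``$\{z:G_z\notin\Ii\}$ is uncountable''; this set is Borel in $M$ ($\Ii$ is Borel--on--Borel), so by the classical perfect set theorem there is, in $M$, a perfect tree $S$ with $[S]^M\se\{z:G_z\notin\Ii\}$.

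Next I would pass to the extension $N$ of $M$ obtained by the countable support iteration of Sacks forcing of length $\w_2^M$. Then $N$ is a standard transitive model of ZFC with $\Ord^N=\Ord^M$ and $\w_1^N\se M$, the iteration is proper and $\w_2^M$-cc (so no cardinals are collapsed), and in $N$ one has $\cof(\Ii)=\w_1<\w_2=\c$ for both $\Ii\in\{\Mm,\Nn\}$, with $\c^N=\w_2^N$ regular. For a fixed $z\in[S]^M\se M$ the statement ``$G_z\notin\Ii$'' is $\Pi_2^1$ in the parameter $(b,z)\in M$ — its negation being $\Sigma_2^1$ via the good coding of $\Ii$ (Lemma~\ref{dobre kodowanie EMiN}) — hence absolute between $M$ and $N$ by Theorem~\ref{Shoenfield}; thus $[S]^M\se C$, where now $C:=\{z:G_z\notin\Ii\}$ is computed in $N$. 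Since $\w_1$ is preserved, $[S]^M$ is uncountable in $N$, and $C$ is Borel in $N$ (Borel--on--Borel holds in $N$), so the perfect set theorem applied inside $N$ gives $|C|=\c^N=\w_2$.

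Then, working inside $N$, I would invoke Fact~\ref{fakt-CKP-miara} (if $\Ii=\Nn$) or Fact~\ref{fakt-CKP-kategoria} (if $\Ii=\Mm$) to fix a family $\Bb$ of Borel sets with $\Bb\se\Ii^+$, $|\Bb|=\cof(\Ii)=\w_1$, such that every Borel $\Ii$-positive set contains a member of $\Bb$. For each $z\in C$ choose $B_z\in\Bb$ with $B_z\se G_z$; then $C=\bigcup_{B\in\Bb}\{z\in C:B\se G_z\}$ is a union of $\w_1$ sets while $|C|=\w_2$ is regular, so for a single $B^*\in\Bb$ the set
\[
E:=\{z\in2^\w:B^*\se G_z\}=\{z:(\forall y)(y\in B^*\to(z,y)\in G)\}
\]
has cardinality $\w_2$. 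From the second description $E$ is $\Pi_1^1$ (with parameters in $N$), and since $|E|=\w_2>\w_1$ while, by the Mansfield--Solovay theorem, a $\Pi_1^1$ set of cardinality greater than $\w_1$ contains a perfect subset, $E$ contains a perfect set, say $[T^*]$. Then $[T^*]\times B^*\se G$; as $B^*\in\Ii^+$, Lemma~\ref{dobre kodowanie EMiN+} supplies $x^*$ with $\varphi_{\Ii^+}(x^*)$ and $\{y:\psi_{\Ii^+}(x^*,y)\}\se B^*$, so $(T^*,x^*)$ witnesses $N\models\vartheta(b)$. Applying Theorem~\ref{Shoenfield} once more brings $\vartheta(b)$ down to $M$ (and, if used, across the auxiliary $\sigma$-closed step), and the usual reflection argument then yields the theorem.

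The crux I expect is the step ``$|E|=\w_2\Rightarrow E$ contains a perfect set'': $E$ is only coanalytic, not Borel, so the ordinary perfect set theorem does not apply directly, and this is precisely the reason for working in the Sacks model, where $\c=\w_2$ is regular and strictly above $\cof(\Ii)=\w_1$ — the pigeonhole over the small basis $\Bb$ is then forced onto a $\Pi_1^1$ set of size $>\w_1$, and Mansfield's bound on thin $\Pi_1^1$ sets converts that cardinality into a perfect subset. A secondary point requiring care is the absoluteness bookkeeping around ``$G_z\notin\Ii$'' (which uses the good codings of both $\Ii$ and $\Ii^+$) and, relatedly, keeping $\vartheta(b)$ genuinely $\Sigma_2^1$: in particular one must resist adding an explicit conjunct ``$\{y:\psi_{\Ii^+}(x,y)\}\notin\Ii$'', whose $\Pi_2^1$ complexity would push $\vartheta(b)$ out of reach of Shoenfield's theorem.
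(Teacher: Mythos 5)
Your proof is correct and follows essentially the same Shoenfield-absoluteness route as the paper: pass to an extension where $\cof(\Ii)=\omega_1<\mathfrak c=\omega_2$, show $\{z:G_z\notin\Ii\}$ has size $\mathfrak c$, pigeonhole over the cofinal $\Ii$-positive family from Fact~\ref{fakt-CKP-miara} (resp.\ Fact~\ref{fakt-CKP-kategoria}) to get a single $B^*$ whose section-set $E=\{z:B^*\subseteq G_z\}$ has size $\omega_2$, extract a perfect subset of the coanalytic $E$, and package this as a $\Sigma^1_2$ sentence for Shoenfield. You supply details the paper leaves implicit --- naming the Sacks iteration, justifying the ``coanalytic of size $>\omega_1$ has a perfect subset'' step via Mansfield--Solovay, and checking the absoluteness of the hypothesis --- and you correctly place $\varphi_{\Ii^+}(x)$ as a conjunct outside the implication, whereas the paper's displayed $\Sigma^1_2$ formula inadvertently puts it in the antecedent (which would render the sentence vacuously true), a minor slip in the paper that your formulation fixes.
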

\begin{proof} Let $V'$ be a generic extension of $\ZFC$ of a transitive model $V$ such that
	$$
	V'\models \aleph_1 = \cof(\Nn) < \c = \aleph_2.
	$$
	Let $G\in Bor(2^\w\times 2^\w)\setminus (\ctbl\otimes \Ii)$ coded in the ground universe $V.$ We work in $V'$. Define  
	$$
	  X = \{ x\in 2^\w:\; B_x\notin \Ii\}.
	$$   
	$X$ is uncountable. Furthermore, since $\Mm$ and $\Nn$ are Borel--on--Borel, ${\{ x\in 2^\w:\; B_x\in \Ii\}}$ is a Borel set. Therefore $X$ has cardinality $\c.$ 
	 
	 In the case of $\Ii=\Nn$ by Fact \ref{fakt-CKP-miara} there exists a family $\Bb\in \perf(2^\w)\setminus \Nn$ of size $\cof(\Nn) = \aleph_1$ cofinal in $\bor(2^\w)[\Nn]$. Since $X$ has size $\aleph_2=\c$,  there exists $Q\in \Bb$ such that
	$$
	|\{x\in 2^\w:\; Q\se G_x\}| = \aleph_2.
	$$
	Clearly, the above set is coanalytic and thus contains some perfect set $P\se 2^\w.$ Hence in the universe $V'$  there are perfect subsets $P,Q\se 2^\w$ with $\lambda(Q)>0$ such that $P\times Q \se G.$  $V'$ models the sentence 
	$$
	(\exists P\in \perf(2^\w ))(\exists x\in X_{\Nn^+}) (\forall y,z\in 2^\w)  (\varphi_{\Nn^+}(x) \land y\in P\land \psi_{\Nn^+}(x,z) \then (y,z) \in G),
	$$
	where $\varphi_{\Nn^+}, \psi_{\Nn^+}$ and $X_{\Nn^+}$ witness that ${\Nn^+}$ has a good coding by Lemma \ref{dobre kodowanie EMiN+}.
	It is $\Sigma^1_2$ sentence with a parameter from $V$. Hence, by Shoenfield Absoluteness Theorem, it is true in $V$.
	
	In the case $\Ii=\Mm$,  $X$ is uncountable. Since $\cof(\Mm)\le \cof(\Nn)$, it is true that  $V'\models\cof(\Mm)= \aleph_1$. Hence, the following sentence is true in $V'$
	$$
(\exists P\in \perf(2^\w ))(\exists x\in X_{\Mm^+}) (\forall y,z\in 2^\w)  (\varphi_{\Mm^+}(x) \land y\in P\land \psi_{\Mm^+}(x,z) \then (y,z) \in G),
$$
	where $\varphi_{\Mm^+}, \psi_{\Mm^+}$ and $X_{\Mm^+}$ witness that ${\Mm^+}$ has a good coding by Lemma \ref{dobre kodowanie EMiN+}.
	The proof is similar as in the first case (we use Fact \ref{fakt-CKP-kategoria} instead of Fact \ref{fakt-CKP-miara}) and use Shoenfield Absoluteness Theorem to come back to $V$.
	
	Notice that in both cases we obtain sentences implying the thesis of the theorem.
\end{proof}

We have the following immediate corollary regarding $\sigma$-ideal $\Mm\cap \Nn$.

\begin{corollary}
  For every set $G\in \bor(2^\w\times 2^\w) \setminus (\ctbl\otimes (\Nn\cap\Mm))$ there are $P\in \perf(2^\w))$ and $B\in \bor(2^\w)\setminus (\Nn\cap\Mm)$ such that $P\times B \se G.$
\end{corollary}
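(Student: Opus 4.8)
The plan is to derive this immediately from Theorem~\ref{shoenfield MN} by a pigeonhole argument on vertical sections. First I would record the following elementary equivalence: for any Borel $G\se 2^\w\times 2^\w$ and any ideal $\Jj$ on $2^\w$, one has $G\notin \ctbl\otimes\Jj$ if and only if the set $\{x\in 2^\w:\; G_x\notin\Jj\}$ is uncountable. Indeed, if that set is countable then $G$ itself witnesses $G\in\ctbl\otimes\Jj$ (take the Borel superset to be $G$); conversely, if $B\in\bor(2^\w\times 2^\w)$ is any superset of $G$, then $G_x\se B_x$ forces $\{x:\; G_x\notin\Jj\}\se\{x:\; B_x\notin\Jj\}$ because $\Jj$ is closed under subsets, so a countable right-hand side is incompatible with an uncountable left-hand side.

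Next I would apply this to $\Jj=\Nn\cap\Mm$. Since $G\notin\ctbl\otimes(\Nn\cap\Mm)$, the set $\{x:\; G_x\notin\Nn\cap\Mm\}$ is uncountable, and
\[
\{x:\; G_x\notin\Nn\cap\Mm\}\;=\;\{x:\; G_x\notin\Nn\}\cup\{x:\; G_x\notin\Mm\},
\]
so at least one of the two sets on the right is uncountable; call the corresponding ideal $\Ii\in\{\Nn,\Mm\}$. By the equivalence of the previous paragraph applied to $\Ii$, we get $G\notin\ctbl\otimes\Ii$. Theorem~\ref{shoenfield MN} then yields $P\in\perf(2^\w)$ and $B\in\bor(2^\w)\setminus\Ii$ with $P\times B\se G$. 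Since $\Nn\cap\Mm\se\Ii$, the set $B$ is not in $\Nn\cap\Mm$ either, which is exactly what the corollary asks for.

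I do not anticipate a genuine obstacle; the argument is a formality once Theorem~\ref{shoenfield MN} is in hand. The only point requiring a moment's care is the first equivalence, where the essential input is simply that $G$ is itself Borel, so it serves as its own Borel-superset witness in the definition of the Fubini product, together with downward closure of the ideal. No appeal to Shoenfield absoluteness, Borel-on-Borel-ness, or the cofinality facts is needed at this stage, since all of that has already been absorbed into Theorem~\ref{shoenfield MN}.
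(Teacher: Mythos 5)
Your argument is correct and matches the paper's intent exactly; the paper labels this an ``immediate corollary'' and gives no proof, and the pigeonhole reduction you describe (uncountably many sections outside $\Nn\cap\Mm$, hence uncountably many outside $\Nn$ or uncountably many outside $\Mm$, hence $G\notin\ctbl\otimes\Ii$ for some $\Ii\in\{\Nn,\Mm\}$, then invoke Theorem~\ref{shoenfield MN} and use $\Nn\cap\Mm\se\Ii$) is the evidently intended route. Your preliminary equivalence $G\notin\ctbl\otimes\Jj\IFF\{x:G_x\notin\Jj\}$ uncountable is stated and justified correctly, using $G$ as its own Borel witness in one direction and downward closure of $\Jj$ in the other.
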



Now we will focus on $\sigma$-ideal $\Ee$ generated by closed null subsets of the Cantor space $2^\w$. In \cite{BarShe} Bartoszyński and Shelah  proved that 
$$
  \add(\Mm) = \add(\Ee) \;\; \& \;\; \cof(\Mm) =  \cof(\Ee).
$$
It is well known that under $\MA+\c=\aleph_3$ we have $\add(\Ee) = \add(\Mm) = \aleph_3.$ We will use the above results to prove the following theorem.

\begin{theorem}\label{shoenfield E} Let $G\in \Bor(2^\w\times 2^\omega)$ be such that $G^c\in \Ee\otimes \Ee$. Then there are 
	$P\in \perf(2^\w)$ and $B \in \bor(2^\w)$ satisfying $B^c\in \Ee$ and $P\times B\se G.$
\end{theorem}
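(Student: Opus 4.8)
The plan is to run the Shoenfield scheme of this section with the Sacks model as the auxiliary universe. Let $V$ be a standard transitive model of ZFC and let $G\in\bor(2^\w\times2^\w)$ with $V\models G^c\in\Ee\otimes\Ee$; fix a Borel code $b$ of $G$ lying in $V$. Taking the Borel witness in the Fubini product to be $G^c$ itself, the hypothesis is equivalent to $\{x:(G_x)^c\notin\Ee\}\in\Ee$ (if $C\supseteq G^c$ is Borel with $\{x:C_x\notin\Ee\}\in\Ee$, then $\{x:(G_x)^c\notin\Ee\}\se\{x:C_x\notin\Ee\}\in\Ee$ since $(G_x)^c\se C_x$), so inside $V$ we fix a Borel $F_\sigma$ null set $A$ with $\{x:(G_x)^c\notin\Ee\}\se A$; hence $V\models(\forall x)(x\notin A\to(G_x)^c\in\Ee)$. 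Now let $V'\supseteq V$ be a generic extension realising $\cof(\Ee)=\aleph_1<\aleph_2=\c$ (e.g. the Sacks model, using $\cof(\Ee)=\cof(\Mm)\le\cof(\Nn)$); recall $\w_1^{V'}\se V$. The key observation is that $(\forall x)(x\notin A\to(G_x)^c\in\Ee)$ is absolute between $V$ and $V'$: by the good coding of $\Ee$ (Lemma~\ref{dobre kodowanie EMiN}) the relation ``$(G_x)^c\in\Ee$'' equals $(\exists u\in X_\Ee)(\varphi_\Ee(u)\land(\forall y)((x,y)\notin G\to\psi_\Ee(u,y)))$, which is $\Sigma^1_2$ in $x$ with parameter $b$, while by Lemma~\ref{tarasik} the relation ``$(G_x)^c\notin\Ee$'' says ``there is a $G_\delta$ null set $D\se(G_x)^c$ with $\lambda(\cl{D})>0$'', which is again $\Sigma^1_2$ in $x$ with parameter $b$; thus ``$(G_x)^c\in\Ee$'' is $\Delta^1_2$ uniformly in $x$, so $(\forall x)(x\notin A\to(G_x)^c\in\Ee)$ is a $\Pi^1_2$ sentence with parameters in $V$ and holds in $V'$ by Shoenfield Absoluteness.

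We now work in $V'$ and prove the conclusion there. By the previous paragraph $Y:=2^\w\bez A$ is a $G_\delta$ comeager set with $(G_x)^c\in\Ee$ for every $x\in Y$, so $|Y|=\c=\aleph_2$. Fix a family $\{N_\alpha:\alpha<\aleph_1\}$ of $F_\sigma$ null sets cofinal in $\Ee$ (the $F_\sigma$ null sets are cofinal in $\Ee$ and $\cof(\Ee)=\aleph_1$). For each $x\in Y$ pick $\alpha(x)<\aleph_1$ with $(G_x)^c\se N_{\alpha(x)}$; since $|Y|=\aleph_2>\aleph_1$, some $\alpha^*$ satisfies $|\{x\in Y:(G_x)^c\se N_{\alpha^*}\}|=\aleph_2$. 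The larger set $\{x:(G_x)^c\se N_{\alpha^*}\}=\{x:(\forall y)((x,y)\notin G\to y\in N_{\alpha^*})\}$ is coanalytic of cardinality $\aleph_2>\aleph_1$, and since a thin coanalytic set has cardinality at most $\aleph_1$, it contains a perfect set $P$. Putting $B:=2^\w\bez N_{\alpha^*}$ we obtain $P\in\perf(2^\w)$, $B\in\bor(2^\w)$ with $B^c=N_{\alpha^*}\in\Ee$, and $P\times B\se G$.

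It remains to transport this back to $V$. By the good coding of $\Ee$ the existence of $P\in\perf(2^\w)$ and $B\in\bor(2^\w)$ with $B^c\in\Ee$ and $P\times B\se G$ is equivalent to the truth of the sentence
\[
(\exists P\in\perf(2^\w))(\exists u\in X_\Ee)\Big(\varphi_\Ee(u)\land(\forall p,z\in2^\w)\big(p\in P\land\neg\psi_\Ee(u,z)\to(p,z)\in G\big)\Big),
\]
with parameter $b\in V$: from $P,B$ as above one picks $u\in X_\Ee$ with $\varphi_\Ee(u)$ and $B^c\se\{z:\psi_\Ee(u,z)\}$, whence $\{z:\neg\psi_\Ee(u,z)\}\se B$ and the sentence holds; conversely, from such $P$ and $u$ the set $B:=\{z:\neg\psi_\Ee(u,z)\}$ is Borel, $B^c=\{z:\psi_\Ee(u,z)\}\in\Ee$ because $\varphi_\Ee(u)$, and $P\times B\se G$. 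This sentence is $\Sigma^1_2$, it holds in $V'$ by the second paragraph, hence by Shoenfield Absoluteness it holds in $V$, which gives the thesis.

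The one delicate point is the transfer of the hypothesis in the first paragraph. The Borel--on--Borel property, available for $\Mm$ and $\Nn$, is not at our disposal for $\Ee$, so instead one uses that ``$(G_x)^c\in\Ee$'' is $\Delta^1_2$ in $x$ --- $\Sigma^1_2$ from the good coding of $\Ee$ and $\Pi^1_2$ from the characterisation in Lemma~\ref{tarasik} --- which is exactly what keeps the relevant universally quantified consequence of ``$G^c\in\Ee\otimes\Ee$'' at the $\Pi^1_2$ level where Shoenfield Absoluteness applies.
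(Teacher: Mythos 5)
Your proof is correct, but it follows a genuinely different route than the paper's. The paper passes to a forcing extension $V'$ in which $\omega_2<\add(\Ee)$, combining the Bartoszy\'{n}ski--Shelah identity $\add(\Ee)=\add(\Mm)$ with MA plus $\c\ge\aleph_3$; there it selects any $Y$ of size $\omega_2$ inside $Z=\{x:(G_x)^c\in\Ee\}$, uses the large additivity to get $(\bigcap_{y\in Y}G_y)^c\in\Ee$, extracts a Borel $B$ with $B^c\in\Ee$ inside the intersection, and obtains the perfect set inside the coanalytic set $\{x:B\se G_x\}$, which has size $\omega_2$. You instead go to the Sacks model, where $\cof(\Ee)=\cof(\Mm)\le\cof(\Nn)=\aleph_1<\aleph_2=\c$, take an $F_\sigma$-null cofinal family of size $\aleph_1$, and pigeonhole $\aleph_2$-many columns onto a single member $N_{\alpha^*}$; the perfect set then comes from the same Mansfield--Solovay argument applied to $\{x:(G_x)^c\se N_{\alpha^*}\}$. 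In effect you trade the paper's ``make $\add(\Ee)$ large'' model for a ``make $\cof(\Ee)$ small'' model, so your argument is the natural continuation of the method the paper uses for Theorem~\ref{shoenfield MN} (via the Cicho\'{n}--Kamburelis--Pawlikowski cofinal base) rather than of the $\add$-based argument it actually gives for Theorem~\ref{shoenfield E}. A genuine improvement in your write-up is the explicit treatment of the transfer of the hypothesis $G^c\in\Ee\otimes\Ee$ from $V$ to $V'$: since $\Ee$ is not known to be Borel--on--Borel, this step is not automatic, and you handle it by showing that ``$(G_x)^c\in\Ee$'' is $\Delta^1_2$ in $x$ (combining the good coding from Lemma~\ref{dobre kodowanie EMiN} with Lemma~\ref{tarasik}), so the needed consequence of the hypothesis is a $\Pi^1_2$ statement with parameters in $V$ which Shoenfield absoluteness lifts into $V'$; the paper leaves this transfer implicit. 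Both arguments close with the same $\Sigma^1_2$ sentence and the same downward Shoenfield step.
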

\begin{proof} Let $V$ be a universe of ZFC such that $G\in V$ and let $V'$ be a forcing extension satisfying $\w_2 < \add(\Ee).$
	\\
	We work in $V'$. Let $Z=\{ x\in 2^\w:\; G_x^{c} \in \Ee\}$. Then $|Z| = \c\ge \w_3.$ Let us choose any set $Y\se Z$ of cardinality $\omega_2$. Since $\w_2 < \add(\Ee)$, the complement of a set $\bigcap_{y\in Y} G_y$ is in $\Ee.$ Let $B\in \Bor(2^\w)$ such that $B^c\in \Ee$, $B\subseteq \bigcap_{y\in Y} G_y$ and consider a set
	$
	A = \{ x\in2^\w:\; B\subseteq G_x \}.
	$
	Clearly, $A$ is coanalytic. Since $Y$ has cardinality $\w_2$ and $Y\subseteq A$,  $A$ contains a perfect subset $P.$ It implies that $V'$ is a model for the following formula
	\[
	(\exists x\in X_\Ee)(\exists P\in \Perf(2^\w))(\forall y,z\in2^\w)(y\in P \land \varphi_\Ee(x) \land \neg\psi_\Ee(x,z) \then (y,z)\in G).
	\]
	where $\varphi_\Ee, \psi_\Ee$ and $X_\Ee$ witness that $\Ee$ has a good coding by Lemma \ref{dobre kodowanie EMiN}. It is $\Sigma^1_2$, hence by Shoenfield Absoluteness Theorem it also holds in $V$.
\end{proof}
  
 Let us remark that another approach to obtained nonstandard proofs can be based on so called univesal set for bases of ideals. Such sets were invastigated by M. Michalski and A. Cieślak in \cite{CieMi}.
  
  In the light of Theorems \ref{shoenfield MN} and \ref{shoenfield E} a natural question arises regarding $\sigma$-ideal $\Ee$.
  \begin{question}
    Does every set $G\in \bor(2^\w\times 2^\w) \setminus (\ctbl\otimes \Ee)$ contain $P\times B$, where $P\in \perf(2^\w)$ and $B\in \bor(2^\w)\setminus \Ee$? 
  \end{question}

  Let us remark that the answer would be positive if $\Ee$ was Borel--on--Borel. 
  
  Now it would be a good moment to marry the concept of inscribing rectangles (Eggleston Theorem) and the concept of inscribing squares (Mycielski Theorem). A straightforward attempt via Shoenfield Absoluteness Theorem requires that every comeager (resp. conull) set can be separated from its meager (resp. null) subset by a Borel set. However, this is not the case as the following result shows.
  \begin{proposition}
    There exists a set $G\se 2^\omega, G^c\in \Ee$ and a set $X\se G, X\in \Ee$ such that there is no Borel set $B$ and no uncountable set $Y\se X$ such that $Y\se B \se G$.
  \end{proposition}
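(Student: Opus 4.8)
The plan is to localise the whole example inside a single closed null set $N\se 2^\w$. Once this is done, every subset of $N$ lies automatically in $\Ee$, so the conditions ``$G^c\in\Ee$'' and ``$X\in\Ee$'' become free, and the only thing left to engineer is the failure of Borel separation -- which I would obtain from a Bernstein-type set relative to $N$. Concretely I would fix a perfect closed null set, e.g. $N=\{x\in 2^\w:\ (\forall n)\ x(2n)=0\}$, which is homeomorphic to $2^\w$ and has Lebesgue measure $0$.

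Next I would build a Bernstein set $A$ inside $N$: enumerate the (exactly $\c$ many) perfect subsets of the Polish space $N$ as $\langle P_\alpha:\alpha<\c\rangle$ and run the usual recursion of length $\c$, picking at stage $\alpha$ two fresh points $a_\alpha,b_\alpha\in P_\alpha$ (possible since fewer than $\c$ points are forbidden while $|P_\alpha|=\c$); put $A=\{a_\alpha:\alpha<\c\}$, so that both $A$ and $N\bez A\supseteq\{b_\alpha:\alpha<\c\}$ meet every perfect subset of $N$. Then set $G=2^\w\bez A$ and $X=N\bez A$. The routine checks are: $G^c=A\se N$ and $X\se N$, hence $G^c,X\in\Ee$ because $N$ is closed null; $X=N\cap G\se G$; and $|X|=\c$, so $X$ is uncountable, which is what makes the statement non-vacuous. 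One may also note that $A$, being Bernstein, is non-measurable, so $G$ is not Borel, in harmony with the asserted conclusion.

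The core of the argument is then short: suppose $B$ is Borel, $Y\se X$ is uncountable and $Y\se B\se G$. Since $Y\se X\se N$, the set $B\cap N$ is an uncountable Borel subset of $2^\w$ contained in $N$, so by the perfect set property for Borel sets it contains a perfect set $P$; as $P\se N$, we have $P=P_\alpha$ for some $\alpha<\c$, whence $a_\alpha\in P\cap A$. But $P\se B\se G=2^\w\bez A$ forces $P\cap A=\0$, a contradiction. I do not anticipate a genuine obstacle: the only delicate point is to choose $N$ so that it is at once perfect (so the Bernstein recursion has room to run) and null (so that its subsets fall into $\Ee$), together with the observation that, after localising to $N$, the statement ``no Borel set separates an uncountable part of $X$ from $G$'' is exactly the Bernstein property ``no perfect subset of $N$ avoids $A$''.
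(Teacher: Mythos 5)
Your proof is correct and follows essentially the same route as the paper: fix a perfect closed null set, take a Bernstein set relative to it to define $X$ and $G$, and derive the contradiction from the perfect set property of uncountable Borel sets. The only cosmetic difference is that you name the complementary Bernstein set $A$ and set $X=N\setminus A$, whereas the paper takes $X$ itself to be Bernstein in $P$; these are of course equivalent.
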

  \begin{proof}
    Let $P\se 2^\omega$ be a perfect null set and let $X$ be a relatively Bernstein set in $P$, i.e. $Q\cap X\ne\0$ and $X^c\cap Q\ne\0$ for every perfect $Q\se P$. Set $G=(2^\w\bez P)\cup X$. Let $Y\se X$ be uncountable and let $B$ be a Borel set containing $Y$. Then $B\cap P$ is uncountable, so $B\cap (P\bez X)\ne \0$. Hence, $B\not\se G$.
  \end{proof}
  Since $\Ee \se \Mm, \Nn$, the $\sigma$-ideal $\Ee$ can be replaced with either $\Mm$ or $\Nn$ and the Proposition will still hold true.

  We will tackle the subject of conjoining Eggleston Theorem and Mycielski Theorem with more conventional methods in the following sections.

\section{Category case}

In this section we will focus on generalizations of category variant of Eggleston Theorem, where the planar set and the vertical side remain comeager but the perfect set $P$ is replaced with a body of some type of perfect tree. Let us start with the case of Silver trees.

\begin{theorem}\label{Silver Eggleston} For every comeager set $G\se (2^\w\times 2^\w)$ there are a Silver tree $T\se 2^\w$ and a dense $G_\delta$-set $B\se2^\w$ such that $[T]\times B \se G.$
\end{theorem}
\begin{proof}
	Let us fix a topological base $\{S_n:\ n\in\w\}$ of the Cantor space $2^\w$.
	Wiothout loss of generality let $G\se 2^\w\times 2^\w$ be a dense $G_\delta$-set, $G=\bigcap_{n\in\w}U_n$ for some descending sequence $(U_n)_{n\in\w}$ of open dense sets.
	
	By induction, we will construct a sequence $(\tau_n)_{n\in\w}$ of elements of $2^{<\w}$ and a sequence $(V_n)_{n\in\w}$  of open subsets of $2^\w$ satysfying, for every natural number $n$, the following conditions:
	\begin{enumerate}
		\item $V_n\se S_n$;
		\item $[{\tau_0}\concat {i_0}\concat {\tau_1}\concat {i_1}\concat\ldots\concat{\tau_{n-1}}\concat {i_{n-1}}\concat{\tau_n}]\times V_n\se U_n$ for every {${(i_0,i_1,i_2,\ldots,i_{n-1})\in 2^n}$}.
	\end{enumerate} 
  
  To construct $\tau_0$ and $V_0$ it is enough to notice that the set $U_0\cap (2^\w\times S_0)$ is open and nonempty.
  
  Assume now that $(\tau_k)_{k<n}$ and $(V_k)_{k<n}$ are already constructed.
  \\
  For $i=(i_0,i_1,\ldots, i_{n-1})\in 2^n$ set $\tau(i)={\tau_0}\concat {i_0}\concat {\tau_1}\concat {i_1}\concat\ldots\concat{\tau_{n-1}}\concat {i_{n-1}}$.
  Consider the set
  $$
  W_n=\bigcap_{i\in 2^n}(([\tau(i)]\times S_n)\cap U_n)+(\tau(i)\concat 000\ldots,000\ldots).
  $$
  
  $W_n$ is open and dense in $[\mbb{0}_{|\tau(i)|}]\times S_n$. Hence, we can find $\tau_n$ and $V_n$ such that  ${[{\mbb{0}_{|\tau(i)|}}\concat {\tau_n} ]\times V_n\se W_n}$.
  The inductive construction is finished.
  
  Now, let us define
  \begin{align*}
    t&= {\tau_0}\concat {0}\concat {\tau_1}\concat {0}\concat{\tau_2}\concat {0}\concat{\tau_3}\concat\ldots,
    \\
    A&=\{\dom\tau_0, \dom\tau_0+\dom\tau_1+1,\dom\tau_0+\dom\tau_1+\dom\tau_2+2,\ldots \}.
  \end{align*}
  
  Notice that the set $\{x\in 2^\w:\ (\forall n\notin A) \,(x(n)=t(n))\}$ is a body of some Silver tree $T$.
  
  Set $B=\bigcap_{n\in\w}\bigcup_{m\ge n}V_m$. $B$ is a dense $G_\delta$ subset of $2^\w$.
  Moreover $[T]\times B\se G$. 
\end{proof}

Since every comeager subset of $2^\omega$ contains a body of a Silver tree, the above Theorem shows that every comeager subset of $2^\omega\times 2^\omega$ contains a rectangle of bodies of Silver trees.
On the other hand it cannot contain any square of bodies of Silver trees as it was shown in \cite[Proposition 2.5]{MiRalZeb} that there is an open dense set $U\se 2^\omega\times 2^\omega$ such that $[T]\times[T]\not\se U\cup\Delta$ for any Silver tree $T\se 2^{<\omega}$. 

  The following theorem generalizes Theorem \ref{Silver Eggleston}. Its proof, though more technical, follows the similar pattern outlined in the previous one.

\begin{theorem}\label{category spinas}
  For every comeager $G\se (2^\w\times 2^\w)$ there are a Spinas tree $T\se 2^{<\w}$ and a dense $G_\delta$-set $B\se2^\w$ such that $[T]\times B \se G.$ Moreover $T$ contains a Silver tree.
\end{theorem}
\begin{proof}
	As usual let us fix a topological base $\{S_n:\ n\in\w\}$ of the Cantor space $2^\w$.
	Without lose of generality $G$ is a dense $G_\delta$-set, $G=\bigcap_{n\in\w}U_n$ for some descending sequence $(U_n)_{n\in\w}$ of open dense sets.
	
	By induction, we will construct a sequence $(\tau_n)_{n\in\w}$ of elements of $2^{<\w}$ and a sequence $(V_n)_{n\in\w}$  of open subsets of $2^\w$ satisfying, for every natural number $n$, the following conditions:
	\begin{enumerate}
		\item $V_n\se S_n$;
		\item $[{\tau_0^{j_0\frown}}{i_0}\concat {\tau_1^{j_1\frown}} {i_1}\concat\ldots\concat{\tau_{n-1}^{j_{n-1}\frown}} {i_{n-1}}\concat{\tau_n^{j_n}}]\times V_n\se U_n$ for $i_k, j_k\in 2$, $k\in\{0,1,2,\ldots,n\}$,
	\end{enumerate} 
	where $\tau_k^{0}=\tau_k$ and $\tau_k^{1}=\mbb{1}_{|\tau_k|}-\tau_k$.
  
  Let $\widehat{U}_0=U_0\cap\{(x_n,y_n)_{n\in\omega}:\; (1-x_n,y_n)_{n\in\omega}\in U_0\}$. Notice that $\widehat{U}_0$ is open and dense, hence the set $\widehat{U}_0\cap (2^\w\times S_0)$ contains a rectangle $[\tau_0]\times V_0$.
  
  Now assume that $(\tau_k)_{k<n}$ and $(V_k)_{k<n}$ are already constructed.
  
  For $i=(i_0,i_1,\ldots, i_{n-1})\in 2^n$ and $j=(j_0,j_1,\ldots, j_{n-1})\in 2^n$ set
  \[
    \tau(i,j)={\tau_0^{j_0\frown}}{i_0}\concat {\tau_1^{j_1\frown}} {i_1}\concat\ldots\concat{\tau_{n-1}^{j_{n-1}\frown}} {i_{n-1}}.
    \]
  
  Let us denote $\widehat{U}_n=U_n \cap (({\mbb{0}_{|\tau(i,j)|}}\concat 11\ldots, 00\ldots)+U_n)$.
  Consider the set
  $$
  W_n=\bigcap_{i\in 2^n}\bigcap_{j\in 2^n}(([\tau(i,j)]\times S_n)\cap \widehat{U}_n)+(\tau(i,j)\concat 000\ldots,000\ldots).
  $$
  
  $W_n$ is open and dense in $[\mbb{0}_{|\tau(i,j)|}]\times S_n$. Hence, we can find $\tau_n$ and $V_n$ such that  
  \[
    {[{\mbb{0}_{|\tau(i,j)|}}\concat {\tau_n} ]\times V_n\se W_n}.
  \]
  
  The inductive construction is finished.
  
  Notice that the set 
  \[
    \{x\in 2^\w:\ (\exists i,j\in 2^\omega)(\forall n\in\omega)(\tau(i\rest n, j\rest n)\se x)\}
  \]
  
  is a body of some Spinas tree $T$ which contains a body of Silver tree.
  
  Set $B=\bigcap_{n\in\w}\bigcup_{m\ge n}V_m$. $B$ is a dense $G_\delta$ subset of $2^\w$.
  Moreover $[T]\times B\se G$.
\end{proof}

  The following Theorem is a successful mix of Eggleston Thoerem and Mycielski Theorem for uniformly perfect trees. Notice that it is not possible to incorporate Silver trees due to the remark made before Theorem \ref{category spinas}.

\begin{theorem}
    Let $G\se 2^\omega\times 2^\omega$ be comeager. Then there exist a uniformly perfect tree $T\se 2^{<\omega}$ and a dense $G_\delta$ set $D\se 2^\omega$ such that $[T]\se D$ and $[T]\times D\se G\cup \Delta$.
  \end{theorem}
  \begin{proof}
    Without loss of generality let us assume that $G=\bigcap_{n\in \omega}U_n$, where $(U_n: n\in\omega)$ is a descending sequence of open and dense subsets of $2^\omega\times 2^\omega$. Let $\{B_n: n\in \omega\}$ be a topological base of $2^\omega$. We will construct via induction sequences of open sets $(V_n: n\in \omega)$ and finite sequences $(\sigma_\tau: \tau \in 2^{<\omega})$ such that for $n\in\omega$ and $\tau, \tau'\in 2^{<\omega}$:
    \begin{enumerate}[(i)]
      \item ${\sigma_\tau}\concat i \se \sigma_{\tau\concat i}$ for $i\in \{0,1\}$;
      \item $|\tau|=|\tau'| \IMP |\sigma_\tau|=|\sigma_{\tau'}|$;
      \item $|\tau|=|\tau'|=n\; \land\; \tau\neq\tau' \IMP [\sigma_\tau] \times [\sigma_{\tau'}]\se U_n$;
      \item $V_n\se B_n$;
      \item $|\tau|=n \IMP [\sigma_\tau]\times V_n\se U_n$.
    \end{enumerate}
    For the step $0$ set $\sigma_\0\in 2^{<\omega}$ and open $V_0\se B_0$ in such a way that $[\sigma_\0]\times V_0\se U_0$. Next let us assume that we already have $(V_k: k\leq n)$ and $(\sigma_\tau: \tau\in 2^{\leq n})$ satisfying the conditions listed above. First let us find a suitable open set $V_{n+1}$. Let $\{\tau_k: k<2^n\}$ be an enumeration of $2^n$ in lexicographical order. For each $k<2^n$ and $i\in\{0,1\}$ we will pick $\sigma'_{{\tau_k}\concat i}$ and open sets $W_k^i\se B_{n+1}$ such that
    \begin{itemize}
      \item ${\sigma_{\tau_k}}\concat i\se \sigma'_{{\tau_k}\concat i}$;
      \item $[\sigma'_{{\tau_k}\concat i}]\times W_k^{i}\se U_{n+1}$;
      \item $W_j^0\es W_j^1$ and $W_j^i\es W_l^i$ for $j<l<2^n$.
    \end{itemize}
    We proceed by induction on $k<2^n$. Observe that $[{\sigma_{\tau_0}}\concat 0]\times B_{n+1}\cap U_{n+1}$ is a nonempty open set, thus it cantains a clopen rectangle. Let us denote it by $[\sigma'_{{\tau_0}\concat 0}]\times W_0^0$. Similarly, the set $[{\sigma_{\tau_0}}\concat 1]\times W_0^0\cap U_{n+1}$ is nonempty and open, hence contains a clopen rectangle which we denote by $[\sigma'_{{\tau_0}\concat 1}]\times W_0^1$. Assume that at the step $k+1<2^n$ we already have sequences $\sigma'_{{\tau_j}\concat i}$ and open sets $W_j^i$ for $j\leq k$ and $i\in\{0,1\}$ with desired properties. Then $[{\sigma_{\tau_{k+1}}}\concat 0]\times W_{k}^1\cap U_{n+1}$ contains a clopen rectangle, which we denote by $[\sigma'_{{\tau_{k+1}}\concat 0}]\times W_{k+1}^0$, and the set $[{\sigma_{\tau_{k+1}}}\concat 1]\times W_{k+1}^0\cap U_{n+1}$ contains a clopen rectangle, which we denote by $[\sigma'_{{\tau_{k+1}}\concat 1}]\times W_{k+1}^1$. We set $V_{n+1}=W_{2^n-1}^1$.
    \\
    We obtain $\sigma_\tau$ for $\tau\in 2^{n+1}$ by applying \cite[Lemma 11]{MiRalZeb} for $\{[\sigma'_\tau]: \tau\in 2^{n+1}\}$ and $U_{n+1}$. Finally we set 
    \begin{align*}
      P&=\bigcap_{n\in\omega}\bigcup_{\tau\in 2^n}[\sigma_\tau]
      \\
      D&=\bigcap_{n\in\omega}\bigcup_{k\geq n}(V_k\cup \bigcup_{\tau\in 2^k}[\sigma_{\tau}])
    \end{align*}
     Clearly, $P$ is a body of some uniformly perfect tree $T$, $D$ is dense $G_\delta$ and $P\se D$. To see that $P\times D\se G\cup \Delta$, let $(x,y)\in P\times D$, $x\neq y$ and $n\in\omega$. We will show that $(x,y)\in U_n$. Since $y\in D$ , there is $k\geq n$ such that $y\in V_k$ or $y\in[\sigma_\tau]$ for $\tau\in 2^k$. Furthermore, since $x\in P$, there is $\tau'\in 2^k$ such that $x\in[\sigma_{\tau'}]$. By (v) $[\sigma_{\tau'}]\times V_k\se U_k\se U_n$, so if $y\in V_k$ - we are done. If $y\in [\sigma_\tau]$ then by $x\neq y$ we may assume that $\sigma_\tau \perp \sigma_{\tau'}$, i.e. $\tau\neq \tau'$, hence by (iii) $[\sigma_{\tau'}]\times[\sigma_{\tau}]\se U_k$. This completes the proof.
  \end{proof}
  
  Let us finish this section with the following problem.
  
  \begin{question}
    Does every comeager set $G\se 2^\omega\times 2^\omega$ contain $([T]\times D)\bez \Delta$, where $T\se 2^{<\omega}$ is a Spinas tree and $D\se 2^\omega$ is a dense $G_\delta$ set such that $[T]\se D$?
  \end{question}

\section{Measure case}

The aim of the last section is to generalize Eggleston Theorem by replacing the prefect set with a body of some kind of perfect tree. In this context the notion of density $1$ points will be helpful.

For a set $A\se 2^\omega\times 2^\omega$ let
  \[
    (x,y)\in \widetilde{A} \IFF \lim_{n\to\infty}\frac{\lambda(A\cap [x\rest n]\times[y\rest n])}{2^{2n}}=1.
  \]
  $\widetilde{A}$ is the set of density $1$ points. If $A$ is closed then $\widetilde{A}\se A$ and $\lambda(\widetilde{A})=\lambda(A)$.
  
  The following Lemma may seem technical, however it will prove indispensable in all of the following proofs.
  
  \begin{lemma}\label{lemat menzurkowy}
    Let $\varepsilon>0$, $F\se 2^\omega$ closed, $\sigma\in 2^{<\omega}$, $H\se 2^\w$ a union of basic clopen sets of size $2^{-|\sigma|}$, satisfying $F\se [\sigma]\times H$ and $\lambda(F)>(1-\varepsilon^2)\lambda([\sigma]\times H)$. Then there exists $X\se [\sigma]$ satisfying $\lambda(X)>(1-\varepsilon)\lambda([\sigma])$ such that for each $x\in X$ 
    \begin{align*}\label{wzor menzurkowy}
      (\star)\quad &(\forall \delta>0)(\exists N\in\omega)(\forall n\ge N)(\exists S_{n}\se 2^n)
      \\
      &(\lambda(\bigcup_{\tau\in S_n}[\tau])>(1-\varepsilon)\lambda(H)\,\land\,(\forall \tau\in S_n)(\lambda(F\cap [x\rest n]\times [\tau])>(1-\delta)2^{-2n})).
    \end{align*}
  \end{lemma}
  \begin{proof}
    Let $X=\{x\in [\sigma]: \lambda(\widetilde{F}_x)>(1-\varepsilon)\lambda(H)\}$ and notice that $\lambda(X)>(1-\varepsilon)\lambda([\sigma])$ by Fubini Theorem. Fix $\delta>0$, $x\in X$ and define a function $f: (\widetilde{F})_x\to\omega$ in the following way
  \[
    f(y)=\min\left\{n\in\omega:\; (\forall m\geq n)( \lambda(F\cap ([x\rest m]\times [y\rest m]))> \frac{1}{2^{2m}}(1-\delta))\right\}.
  \]
  For a fixed $m\in\omega$ the function
  \[
    y\mapsto \lambda\big(F\cap([x \rest m]\times[y \rest m])\big)
  \]
  is continuous, hence $f$ is measurable. Notice that $(\widetilde{F})_x=\bigcup_{n\in\omega}f^{-1}[\{n\}]$, therefore there exists $N\in\omega$ such that 
  \[
    \lambda(Y_N)> \lambda(H)(1-\varepsilon), \tn{ where\quad} Y_N=\{y\in (\widetilde{F})_x:\; f(y)\leq N\}.
  \]
  For $n\ge N$ set 
  \[
    S_n=\{y\rest n: y\in (\widetilde{F})_x\;\land\;f(y)\leq N\}.
  \]
  Clearly, $Y_N\se \bigcup_{\tau\in S_n}[\tau]$ and $\lambda(\bigcup_{\tau\in S_n}[\tau])>(1-\varepsilon)\lambda(H)$. The condition
  \[
    (\forall \tau \in S_n )\,(\lambda(F\cap [x\rest n]\times[\tau])>(1-\delta)\frac{1}{2^{2n}})
  \]
  is a straightforward consequence of the definition of $f$.
  \end{proof}

Mirroring the category case, the following Theorem shows that every conull subset of $2^\omega\times 2^\omega$ contains a rectangle of bodies of Silver trees though it cannot contain any square of bodies of Silver trees, since  there exists a small set $A\se 2^\omega\times 2^\omega$ such that $(A\cap [T]\times [T])\setminus \Delta \neq\emptyset$ for any Silver tree $T\se 2^{<\omega}$ (see \cite[Proposition 3.4]{MiRalZeb}).

\begin{theorem}\label{measure case Silver}
  For every conull set $F\se (2^\w\times 2^\w)$ there are a Silver tree $T\se 2^{<\omega}$ and $F_\sigma$ conull set $H\se 2^\omega$ such that $[T]\times H \se F$.
\end{theorem}
\begin{proof}
  Let $(F_n)_{n\in\omega}$ be an ascending sequence of closed sets such that $\bigcup_{n\in\omega}F_n\se F$ and $\lim_{n\to\infty}\lambda(F_n)=1$.
  
  We will construct via induction on $k\in \omega$
  \begin{itemize}
    \item $N_k\in \omega$;
    \item $\sigma_\tau \in 2^{N_k},$ for every $\tau \in 2^k$;
    \item $H_{j,k}\se 2^\omega,$ which is a union of basic clopen sets of size $2^{-N_k}$ for $j=0,1,\dots , k$;
    \item $n_k\in \omega$;
  \end{itemize}
  satisfying for $\displaystyle\varepsilon_k=\frac{1}{2^{2k+2}(k+1)}$
  \begin{enumerate}[1)]
    \item\label{T jest perfect 1}  ${\sigma_{\tau}}\concat i\se \sigma_{\tau\concat i}$ for $\tau\in 2^k$ and $i=0,1$;
    \item\label{T jest Silver 1} $\sigma_{\tau}\rest(N_{k}, N_{k+1})=\sigma_{\tau'}\rest(N_k, N_{k+1})$ for $\tau, \tau'\in 2^{k+1}$;
    \item\label{warunek na H_j 1} $H_{k,k}=2^\w$ and $\lambda(H_{j,k+1})>\left(1-\frac{1}{2^{k+1}(k+1)}\right)\lambda(H_{j,k})$ for $j\le k$;
    \item\label{egg1} $(\forall \rho\in 2^{N_k})([\rho]\se H_{j,k}\to (\forall \tau\in 2^k)\left(\lambda(F_{n_j}\cap ([\sigma_\tau]\times[\rho]))>(1-\varepsilon_k^2)\lambda([\sigma_\tau]\times [\rho])\right)$
    \\
    and in consequence
    \[
      \lambda(F_{n_j}\cap ([\sigma_\tau]\times H_{j,k}))>(1-\varepsilon_k^2)\lambda([\sigma_\tau]\times H_{j,k}).
    \]
  \end{enumerate}
    
  Step $0$. Set $N_0=0$, $\sigma_\0=\0$, $H_{0,0}=2^\omega$ and let $n_0$ be such that $\lambda(F_{n_0})>(1-\varepsilon_0^2)$.
  
  Step $1$. We apply Lemma \ref{lemat menzurkowy} for $\varepsilon=\varepsilon_0$, $F=F_{n_0}, H=H_{0,0}, \sigma=\sigma_\0$ to obtain $X_{0,\0}$ such that $\lambda(X_{0,\0})>(1-\varepsilon_0)\lambda([\sigma_\0])=\frac{3}{4}$. Notice that $(X_{0,\0}+(0))\cap(X_{0,\0}+(1))\ne \0$, hence pick $x\in (X_{0,\0}+(0))\cap(X_{0,\0}+(1))$ and set $x_{(i)}=x+(i)$ for $i=0,1$. By Lemma \ref{lemat menzurkowy} we also get $N_{\0,0}$ and $N_{\0,1}$, associated with $x_{(0)}$ and $x_{(1)}$ respectively, that satisfy $(\star)$ for $\delta=\varepsilon_1^2$. Let $N_1=\max\{N_{\0,0}, N_{\0,1}\}$ and let $S^0_\0, S^1_\0\subseteq 2^{N_1}$  be sets of sequences corresponding to $x_{(0)}$ and $x_{(1)}$ respectively with $n=N_{1}$ according to Lemma \ref{lemat menzurkowy}.
\\
Set $H_{0,1,(i)}=\bigcup\{[\rho]:\; \rho\in S^i_\0\}$.
Notice that $H_{0,1}=H_{0,1,(0)}\cap H_{0,1,(1)}$ is a subset of $H_{0,0}$ and
  $$
    \lambda(H_{0,1})>(1-2\varepsilon_0)\lambda(H_{0,0})=\frac{1}{2}.
  $$
  This way the essential part of \ref{warunek na H_j 1}) for this step is satisfied. Moreover, for every $i\in\{0,1\}$
  $$
    \lambda(F_{n_0}\cap ([x_{(i)}\rest N_1]\times H_{0,1}))>(1-\varepsilon_1^2)\lambda([x_{(i)}\rest N_1]\times H_{0,1}).
  $$ 
  This takes care of the main part of \ref{egg}). For $i\in\{0,1\}$ let us define $\sigma_{(i)}=x_{(i)}\rest N_1$. Clearly, this satisfies conditions \ref{T jest perfect 1}) and \ref{T jest Silver 1}).
  \\
  Pick $n_1>n_0$ such that for every $i\in\{0,1\}$ 
  \begin{align*}
    \lambda(F_{n_1}\cap ([\sigma_{(i)}]\times 2^\w))&>(1-\varepsilon_{1}^2)\lambda([\sigma_{(i)}]\times 2^\w).
  \end{align*}
  Set $H_{1,1}=2^\omega$. Now both conditions \ref{warunek na H_j}) and \ref{egg}) are satisfied.

  Step $k+1$. For each $j\le k$ we apply Lemma \ref{lemat menzurkowy} for $\varepsilon=\varepsilon_k$, $F=F_{n_j}, H=H_{j,k}, \sigma=\sigma_\tau$, $\tau\in 2^k$, to obtain $X_{j,\tau}\subseteq [\sigma_\tau]$ such that $\lambda(X_{j,\tau})>(1-\varepsilon_k)\lambda([\sigma_\tau])$. 
  \\
  Set $X_{k+1,\tau}=\bigcap_{j=0}^k X_{j,\tau}$ and $X_{k+1}=\bigcap_{\tau\in 2^k}\bigcap_{i\in\{0,1\}}(X_{k+1, \tau}+{\sigma_\tau}\concat i)$. Notice that
  \[
    \lambda(X_{k+1})>(1-2^{k+1}(k+1)\varepsilon_k)2^{-N_k}>0.
  \]
  Pick $x\in X_{k+1}$ and for $i=0,1$ denote $x_{\tau\concat i}=x+{\sigma_{\tau}}\concat i\in X_{k+1,\tau}$ along with associated $N_{\tau,i}$ satisfying $(\star)$ for $\delta=\varepsilon_{k+1}^2$. Set
  \[
    N_{k+1}=\max\{N_{\tau, i}:\; i\in\{0,1\},\ \tau\in 2^k\}.
  \]
  Let $S^0_\tau, S^1_\tau\subseteq 2^{N_{k+1}}$  be sets of sequences corresponding to $x_{\tau\concat 0}$ and $x_{\tau\concat 1}$ respectively with $n=N_{k+1}$ according to Lemma \ref{lemat menzurkowy}.
Set $H_{j,k+1,\tau\concat i}=\bigcup\{[\rho]:\; \rho\in S^i_\tau\}$ for $i\in\{0,1\}$.
Notice that $H_{j,k+1}=\bigcap\{H_{j,k+1,\tau}:\ \tau\in 2^{k+1}\}$ is a subset of $H_{j,k}$ and
  $$
    \lambda(H_{j,k+1})>(1-2^{k+1}\varepsilon_k)\lambda(H_{j,k}).
  $$
  This satisfies the essential part of \ref{warunek na H_j 1}) for this step. Moreover, for every $i\in\{0,1\}$ and $\tau \in 2^k$
  $$
    \lambda(F_{n_j}\cap ([x_{\tau\concat i}\rest N_{k+1}]\times H_{j,k+1}))>(1-\varepsilon_{k+1}^2)\lambda([x_{\tau\concat i}\rest N_{k+1}]\times H_{j,k+1}).
  $$
  This takes care of the main part of condition \ref{egg}). For $i\in\{0,1\}$ and $\tau\in 2^k$ let us define $\sigma_{\tau\concat i}=x_{\tau\concat i}\rest N_{k+1}=(x+{\sigma_\tau}\concat i)\rest N_{k+1}$. Notice that conditions \ref{T jest perfect 1}) and \ref{T jest Silver 1}) are met.
  \\
   Pick $n_{k+1}>n_k$ such that for every $\tau\in 2^{k+1}$ 
  \begin{align*}
    \lambda(F_{n_{k+1}}\cap ([\sigma_\tau]\times 2^\w))&>(1-\varepsilon_{k+1}^2)\lambda([\sigma_\tau]\times 2^\w).
  \end{align*}
  Set $H_{k+1,k+1}=2^\omega$. Now conditions \ref{warunek na H_j 1}) and \ref{egg1}) for this step are fully satisfied.
  
  The construction is complete.
  \\
  Set $T=\{\rho\in 2^{<\w}:\; (\exists \tau\in 2^{<\w})(\rho\se\sigma_\tau)\}$, $H=\bigcup_{j}H_j$, where $H_j=\bigcap_{k\ge j}H_{j,k}$.
  \\
  $H$ is conull. Let notice that for $j\in \omega$ by \ref{warunek na H_j 1})
  \[
    \lambda(H_j)\ge \lambda(H_{j,j})\prod_{k\ge j}(1-\frac{1}{2^{k+1}(k+1)})=\prod_{k\ge j}(1-\frac{1}{2^{k+1}(k+1)}).
  \]
  Furthermore the product $\prod_{k\ge 0}(1-\frac{1}{2^{k+1}(k+1)})$ is convergent, hence
  \[
    \lim_{j\to\infty}\prod_{k\ge j}(1-\frac{1}{2^{k+1}(k+1)})=1.
  \]
  By \ref{T jest perfect 1}) and \ref{T jest Silver 1}) $T$ is a Silver tree.
  \\
  To show that $[T]\times H\se F$ we will prove that $[T]\times H_j\se F_{n_j}$ for each $j\in\omega$. Pick any $(t,h)\in [T]\times H_j$. Notice that $[T]=\bigcap_{k\in\omega}\bigcup_{\tau\in 2^k}[\sigma_\tau]$. By \ref{egg}) for each $k\ge j$ there are (unique) $\alpha_k, \beta_k\in 2^{N_k}$ such that $\alpha_k\in\{\sigma_\tau: \tau \in 2^k\}$, $[\beta_k]\se H_{j,k}$ and
  \[
    (t,h)\in [\alpha_k]\times[\beta_k] \tn{\; and \;} ([\alpha_k]\times[\beta_k])\cap F_{n_j}\ne \0.
  \]
  Since $\bigcap_{k\ge j}[\alpha_k]\times[\beta_k]=\{(t,h)\}$ and $F_{n_j}$ is closed, $(t,h)\in F_{n_j}$.
\end{proof}

Even more general statement is true involving Spinas trees.

\begin{theorem}
  For every set $F\se (2^\w\times 2^\w)$ there are a Spinas tree $T\se 2^{<\w}$ and $F_\sigma$ conull set $B\se 2^\omega$ such that $[T]\times B \se F$. Moreover, $T$ contains a Silver tree.
\end{theorem}

The proof is a natural modification of the proof of Theorem \ref{measure case Silver} borrowing some bits from the proof of Theorem \ref{category spinas}. The authors believe proving it in detail is neither helpful nor necessary.

Again, analogously to the category case, relaxing the requirement on the type of tree yields us the Eggleston-Mycielski result for uniformly perfect trees.
  
\begin{theorem} 
  For every conull set $F\se (2^\w\times 2^\w)$ there are a uniformly perfect tree $T\se 2^{<\omega}$ and $F_\sigma$ conull set $B\se 2^\omega$ such that $[T]\se B$ and $[T]\times B \se F\bez \Delta$.
\end{theorem}
\begin{proof}
  Let $(F_n)_{n\in\omega}$ be an ascending sequence of closed sets such that $\bigcup_{n\in\omega}F_n\se F$ and $\lim_{n\to\infty}\lambda(F_n)=1$. Let $d(\tau, \tau')=\min\{n\in\omega : \; \tau(n)\ne \tau(n')\}$ and $d(\tau,\tau)=|\tau|$.
  
  We will construct via induction on $k\in \omega$
  \begin{itemize}
    \item $N_k\in \omega$;
    \item $\sigma_\tau \in 2^{N_k},$ for every $\tau \in 2^k$;
    \item $H_{j,k}\se 2^\omega,$ which is a union of basic clopen sets of size $2^{-N_k}$ for $j=0,1,\dots , k$;
    \item $n_k\in \omega$;
  \end{itemize}
  satisfying for $\displaystyle\varepsilon_k=\frac{1}{2^{2k+2}(k+1)}$
  \begin{enumerate}[1)]
    \item\label{T jest perfect}  $\sigma_{\tau\concat 0}, \sigma_{\tau\concat 1} \es \sigma_{\tau}$ and $\sigma_{\tau\concat 0}\ne \sigma_{\tau\concat 1}$ for $\tau\in 2^{k}$;
    \item\label{T jest uniformly} $d(\sigma_{\tau\concat 0}, \sigma_{\tau\concat 1})
    =d(\sigma_{\tau'{\concat 0}}, \sigma_{\tau' {\concat 1}})$ for $\tau, \tau'\in 2^k$;
    \item\label{warunek na H_j} $H_{k,k}=2^\w$ and $\lambda(H_{j,k+1})>\left(1-\frac{1}{2^{k+1}(k+1)}\right)\lambda(H_{j,k})$ for $j\le k$;
    \item\label{egg} $(\forall \rho\in 2^{N_k})([\rho]\se H_{j,k}\to (\forall \tau\in 2^k)\left(\lambda(F_{n_j}\cap ([\sigma_\tau]\times[\rho]))>(1-\varepsilon_k^2)\lambda([\sigma_\tau]\times [\rho])\right)$
    \\
    and in consequence
    \[
      \lambda(F_{n_j}\cap ([\sigma_\tau]\times H_{j,k}))>(1-\varepsilon_k^2)\lambda([\sigma_\tau]\times H_{j,k}).
    \]
    \item\label{myc} $\lambda(([\sigma_\tau]\times[\sigma_\tau'])\cap F_{n_{d(\tau,\tau')}})>(1-\varepsilon_k)2^{-2N_k}$ for $j\le k$;
  \end{enumerate}
    
  Step $0$. Set $N_0=0$, $\sigma_\0=\0$, $H_{0,0}=2^\omega$ and let $n_0$ be such that $\lambda(F_{n_0})>(1-\varepsilon_0^2)$.
  
  Step $1$. We apply Lemma \ref{lemat menzurkowy} for $\varepsilon=\varepsilon_0$, $F=F_{n_0}, H=H_{0,0}, \sigma=\sigma_\0$ to obtain $X_{0,\0}$ such that $\lambda(X_{0,\0})>(1-\varepsilon_0)\lambda([\sigma_\0])=\frac{3}{4}$. For any set $A\se 2^\w\times 2^\w$ denote $A^s=A\cap A^{-1}$. Notice that
  \[
    \lambda(F_{n_0}^s)>1-2\varepsilon_0^2=\frac{14}{16}.
  \]  
  Hence
  \[
    \lambda(F_{n_0}^s\cap X_{0,\0}^2)>\frac{7}{16}
  \]
  and there are distinct $x_{(0)}$, $x_{(1)}\in X_{0,\0}$ with $(x_{(0)}$, $x_{(1)})\in \widetilde{F_{n_0}^s}$. By Lemma \ref{lemat menzurkowy} we also get $N_{\0,0}$ and $N_{\0,1}$, associated with $x_{(0)}$ and $x_{(1)}$ respectively, that satisfy $(\star)$ for $\delta=\varepsilon_1^2$. Choose $N_1\ge N_{\0,0}, N_{\0,1}$ such that
  \[
    (\forall n\ge N_{1})(\lambda(([x_{(0)}\rest n]\times[x_{(1)}\rest n])\cap F_{n_0}^s)>(1-\varepsilon_1)\frac{1}{2^n}).
  \]
  
  This inequality ensures that for this step the condition \ref{myc}) will be met. Let $S^0_\0, S^1_\0\subseteq 2^{N_1}$  be sets of sequences corresponding to $x_{(0)}$ and $x_{(1)}$ respectively with $n=N_{1}$ according to Lemma \ref{lemat menzurkowy}.

Set $H_{0,1,(i)}=\bigcup\{[\rho]:\; \rho\in S^i_\0\}$.
Notice that $H_{0,1}=H_{0,1,(0)}\cap H_{0,1,(1)}$ is a subset of $H_{0,0}$ and
  $$
    \lambda(H_{0,1})>(1-2\varepsilon_0)\lambda(H_{0,0})=\frac{1}{2}.
  $$
  This way the essential part of \ref{warunek na H_j}) for this step is satisfied. Moreover, for every $i\in\{0,1\}$
  $$
    \lambda(F_{n_0}\cap ([x_{(i)}\rest N_1]\times H_{0,1}))>(1-\varepsilon_1^2)\lambda([x_{(i)}\rest N_1]\times H_{0,1}).
  $$ 
  This takes care of the main part of \ref{egg}). For $i\in\{0,1\}$ let us define $\sigma_{(i)}=x_{(i)}\rest N_1$. Clearly, this satisfies conditions \ref{T jest perfect}) and \ref{T jest uniformly}).
  
  Pick $n_1>n_0$ such that for every $i\in\{0,1\}$ 
  \begin{align*}
    \lambda(F_{n_1}\cap ([\sigma_{(i)}]\times 2^\w))&>(1-\varepsilon_{1}^2)\lambda([\sigma_{(i)}]\times 2^\w),
    \\
    \lambda([\sigma_{(i)}]^2\cap F_{n_{1}})&>(1-\varepsilon_{1})2^{-2|\sigma_{(i)}|}.
  \end{align*}
  Set $H_{1,1}=2^\omega$. Now all conditions \ref{warunek na H_j}), \ref{egg}) and \ref{myc}) are satisfied.

  Step $k+1$. For each $j\le k$ we apply Lemma \ref{lemat menzurkowy} for $\varepsilon=\varepsilon_k$, $F=F_{n_j}, H=H_{j,k}, \sigma=\sigma_\tau$, $\tau\in 2^k$, to obtain $X_{j,\tau}\subseteq [\sigma_\tau]$ such that $\lambda(X_{j,\tau})>(1-\varepsilon_k)\lambda([\sigma_\tau])$.
  \\
Notice that $X_{k+1,\tau}=\bigcap_{j=0}^k X_{j,\tau}$ is a subset of $[\sigma_\tau]$ and
  $$
    \lambda(X_{k+1,\tau})>(1-(k+1)\varepsilon_k)\lambda([\sigma_\tau]).
  $$  
  From the previous step for for $\tau, \tau'\in 2^k$ and $d(\tau, \tau')=l$ we have 
  \[
    \lambda(([\sigma_\tau]\times [\sigma_{\tau'}])]\cap F_{n_l})>(1-\varepsilon_k)\lambda({[\sigma_\tau]})^2.
  \]
  Set $X_{k+1}=\bigcap_{\tau\in 2^k}(X_{k+1, \tau}+\sigma_\tau)$ and notice that
  \[
    \lambda(X_{k+1})>(1-2^k(k+1)\varepsilon_k)2^{-N_k}.
  \]
  Define
  \[
    R_{k+1}=\bigcap_{\tau,\tau'\in 2^k}(([\sigma_\tau]\times[\sigma_{\tau'}])\cap F_{n_{d(\tau, \tau')}})+(\sigma_{\tau},\sigma_{\tau'}))^s.
  \]
  and see that
  \[
    \lambda(R_{k+1})>(1-2^{2k+1}\varepsilon_k)2^{-2N_{k}}.
  \]
  Since $X_{k+1}^2, R_{k+1}\se [\mathbb{0}_{N_k}]^2$ and 
  \[
    \lambda(X_{k+1}^2)+\lambda(R_{k+1})>\lambda([\mathbb{0}_{N_k}])^2
  \]
  the set $X_{k+1}^2\cap R_{k+1}$ has positive measure. Pick $(x_0,x_1)\in \widetilde{R_{k+1}}\cap X_{k+1}^2$, $x_0\ne x_1$. For $i=0,1$ denote $x_{\tau\concat i}=x_i+\sigma_{\tau}\in X_{k+1,\tau}$ along with associated $N_{\tau,i}$ satisfying $(\star)$ for $\delta=\varepsilon_{k+1}^2$. Set
  \[
    N_{k+1}\ge\max\{N_{\tau, i}:\; i\in\{0,1\},\ \tau\in 2^k\}
  \]
  such that
  \[
    (\forall n\ge N_{k+1})(\lambda(([x_0\rest n]\times[x_1\rest n])\cap R_{k+1})>(1-\varepsilon_{k+1})\frac{1}{2^{2n}}). 
  \]
  This way the crucial part of \ref{myc}) for this step is satisfied. Let $S^0_\tau, S^1_\tau\subseteq 2^{N_{k+1}}$  be sets of sequences corresponding to $x_{\tau\concat 0}$ and $x_{\tau\concat 1}$ respectively with $n=N_{k+1}$ according to Lemma \ref{lemat menzurkowy}.
Set $H_{j,k+1,\tau\concat i}=\bigcup\{[\rho]:\; \rho\in S^i_\tau\}$ for $i\in\{0,1\}$.
Notice that $H_{j,k+1}=\bigcap\{H_{j,k+1,\tau}:\ \tau\in 2^{k+1}\}$ is a subset of $H_{j,k}$ and
  $$
    \lambda(H_{j,k+1})>(1-2^{k+1}\varepsilon_k)\lambda(H_{j,k}).
  $$
  
  This satisfies the essential part of \ref{warunek na H_j}) for this step. Moreover, for every $i\in\{0,1\}$ and $\tau \in 2^k$
  $$
    \lambda(F_{n_j}\cap ([x_{\tau\concat i}\rest N_{k+1}]\times H_{j,k+1}))>(1-\varepsilon_{k+1}^2)\lambda([x_{\tau\concat i}\rest N_{k+1}]\times H_{j,k+1}).
  $$
  
  This takes care of the main part of condition \ref{egg}). For $i\in\{0,1\}$ and $\tau\in 2^k$ let us define $\sigma_{\tau\concat i}=x_{\tau\concat i}\rest N_{k+1}=(x_i+\sigma_\tau)\rest N_{k+1}$. Notice that conditions \ref{T jest perfect}) and \ref{T jest uniformly}) are met.

   Pick $n_{k+1}>n_k$ such that for every $\tau\in 2^{k+1}$ 
  \begin{align*}
    \lambda(F_{n_{k+1}}\cap ([\sigma_\tau]\times 2^\w))&>(1-\varepsilon_{k+1}^2)\lambda([\sigma_\tau]\times 2^\w),
    \\
    \lambda([\sigma_\tau]^2\cap F_{n_{k+1}})&>(1-\varepsilon_{k+1})\frac{1}{2^{2|\sigma_\tau|}}.
  \end{align*}
  
  Set $H_{k+1,k+1}=2^\omega$. Now conditions \ref{warunek na H_j}), \ref{egg}) and \ref{myc}) for this step are fully satisfied.
  
  The construction is complete.
  
  Set $T=\{\rho\in 2^{<\w}:\; (\exists \tau\in 2^{<\w})(\rho\se\sigma_\tau)\}$, $H=\bigcup_{j}H_j$, where $H_j=\bigcap_{k\ge j}H_{j,k}$ and $B=[T]\cup H$.
  
  The prove that $H$ is conull is exactly the same as in the previous Theorem.
  
  By \ref{T jest perfect}) and \ref{T jest uniformly}) $T$ is uniformly perfect.
  
  To demonstrate that $[T]\times B\se F\bez\Delta$ it suffices to show that $[T]\times H\se F$ and $[T]\times[T]\se F\bez\Delta$.
  
  The proof of the former is almost identical to the proof of the analogous fact in the previous Theorem.
  
  To see the latter, choose any $(t,t')\in [T]^2$, $t\ne t'$. Let 
  \[
    j=\min\{n\in\omega:\; (\exists \tau, \tau'\in 2^n, \tau\ne \tau')(\sigma_{\tau}\se t\,\land\, \sigma_{\tau'}\se t')\}
   \]
For every $k\ge j$ there are $\tau_k, \tau'_k\in 2^k$, $\tau_k\ne\tau'_k$, such that $(t,t')\in [\sigma_{\tau_k}]\times[\sigma_{\tau'_k}]$. By \ref{myc}) $[\sigma_{\tau_k}]\times[\sigma_{\tau'_k}]\cap F_{n_j}\ne \0$. Since $\bigcap_{k\ge j}[\sigma_{\tau_k}]\times[\sigma_{\tau'_k}]=\{(t,t')\}$ and $F_{n_j}$ is closed, $(t,t')\in F_{n_j}$.
\end{proof}

Let us finish the paper with the following problem.

\begin{question}
    Does every conull set $G\se 2^\omega\times 2^\omega$ contain $([T]\times H)\bez \Delta$, where $T\se 2^{<\omega}$ is a Spinas tree and $H\se 2^\omega$ is a conull $F_\sigma$ set such that $[T]\se H$?
  \end{question}

\printbibliography

@article{PPU,
author = {Banakh, T. and Rałowski, R. and Żeberski, Sz.},
title = {Classifying invariant $\sigma$-ideals with analytic base on good Cantor measure spaces},
journal = {Proceedings of the American Mathematical Society},
volume = {144},
year = {2016},
number = {},
pages = {837--851}
}

@article{BNT,
author = {Balcerzak, M. and Natkaniec, T. and Terepeta, M.},
title = {Families of feebly continuous functions and their properties},
journal = {Topology and its Applications},
volume = {272},
year = {2020},
number = {},
pages = {107077}
}

@article{BanZdo,
author = {Banakh, T. and Zdomskyy, L.},
title = {Non-meager free sets for meager relations on Polish spaces},
journal = {Proceedings of the American Mathematical Society},
volume = {143},
year = {2015},
number = {},
pages = {2719--2724}
}

@article{BarShe,
author = {Bartoszyński, T. and Shelah, S.},
title = {Closed measure zero sets},
journal = {Annals of Pure and Applied Logic},
volume = {58},
year = {1992},
number = {},
pages = {93–-110}
}

@article{CiKamPaw,
author = {Cichoń, J. and Kamburelis, A. and Pawlikowski, J.},
title = {On dense subsets of the measure algebra},
journal = {Proc. Amer. Math. Soc.},
volume = {94},
year = {1985},
number = {1},
pages = {142–-146}
}

@article{CieMi,
author = {Ciślak, A. and Michalski, M.},
title = {Universal sets for ideals},
journal = {Bulletin Polish Acad. Sci. Math.},
volume = {66},
year = {2018},
number = {},
pages = {157–-166}
}

@article{CieRal,
  title={Nonmeasurable images},
  author={Cie{\'s}lak, A. and Ra{\l}owski, R.},
  journal={Topology and its Applications},
  volume={327},
  pages={1--13},
  year={2023}
}

@article{Egg,
author = {Eggleston, H. G.},
title = {Two measure properties of Cartesian product sets},
journal = {The Quarterly Journal of Mathematics},
volume = {5},
year = {1954},
number = {},
pages = {108–-115}
}

@book{Jech,
author = {Jech, T.},
title = {Set theory},
edition = {millenium edition},
publisher = {Springer-Verlag},
address = {},
year = {2003}
}

@book{Kech,
author = {Kechris, A.},
title = {Classical descriptive set theory},
edition = {},
publisher = {Springer-Verlag New York, Inc},
address = {},
year = {1995}
}

@article{MiRalZeb,
author = {Michalski, M. and Rałowski, R. and \.Zeberski, Sz.},
title = {Mycielski among trees},
journal = {Mathematical Logic Quarterly},
volume = {67},
year = {2021},
number = {},
pages = {271--281}
}

@article{Myc,
author = {Mycielski, J.},
title = {Algebraic independence and measure},
journal = {Fundamenta Mathematicae},
volume = {61},
year = {1967},
number = {},
pages = {165–-169}
}

@article{SolSpi,
author = {Solecki, S. and Spinas, O.},
title = {Dominating and unbounded free sets},
journal = {Journal of Symbolic Logic},
volume = {64},
year = {1999},
number = {},
pages = {75–-80}
}

@article{Spi1,
author = {Spinas, O.},
title = {Ramsey and freeness properties of Polish planes},
journal = {Proceedings of the London Mathematical Society},
volume = {82},
year = {2001},
number = {},
pages = {31–-63}
}

@article{Zeb,
author = {Żeberski, Sz.},
title = {Nonstandard proofs of Eggleston like theorems},
journal = {Proceedings of the Ninth Topological Symposium},
volume = {1},
year = {2001},
number = {},
pages = {353–-357}
}

\end{document}